\newcommand\abs[1]{\left|#1\right|}
\newcommand*{\aA}{\mathcal{A}}
\newcommand*{\pPl}{{\smash{\mathrm{P}}}_{\smash{\mathrm{lps}}}}
\newcommand*{\pPr}{{\smash{\mathrm{P}}}_{\smash{\mathrm{rps}}}}
\newcommand*{\pPx}{{\smash{\mathrm{P}}}_{\smash{\mathrm{xps}}}}
\newcommand*{\plac}{{\smash{\mathrm{plac}}}}
\newcommand*{\bell}{{\smash{\mathrm{lps}}}}
\newcommand*{\belr}{{\smash{\mathrm{rps}}}}
\newcommand*{\belx}{{\smash{\mathrm{xps}}}}
\newcommand*{\bellcong}{\equiv_\bell}
\newcommand*{\belrcong}{\equiv_\belr}
\newcommand*{\belxcong}{\equiv_\belx}
\newcommand*{\rR}{\mathcal{R}}
\newcommand*{\cont}{{\smash{\mathrm{cont}}}}
\newcommand*{\coch}{{\smash{\mathrm{cochseq}}}}
\newcommand*{\K}{{\smash{\mathrm{K}}}}
\def \lps {lPS}
\def \rps {rPS}
\def \ps {PS}
\DeclarePairedDelimiter{\parens}{\lparen}{\rparen}
\newcommand*{\evlit}{{\mathrm{ev}}}
\newcommand*{\plit}{{\mathrm{p}}}
\newcommand*{\llit}{{\mathrm{l}}}
\newcommand*{\olit}{{\mathrm{o}}}
\newcommand*{\rlit}{{\mathrm{r}}}
\newcommand*{\ev}[2][]{\evlit\parens[#1]{#2}}
\newcommand*{\evsim}{\sim_\evlit}
\newcommand*{\psim}{\sim_\plit}
\newcommand*{\tpsim}{{\sim}_\plit^*}
\newcommand*{\lsim}{\sim_\llit}
\newcommand*{\osim}{\sim_\olit}
\newcommand*{\rsim}{\sim_\rlit}
\newcommand*{\nlsim}{\nsim_\llit}
\newcommand*{\npsim}{\nsim_\plit}
\theoremstyle{plain}
\newtheorem{thm}{Theorem}[section]
\newtheorem{lem}[thm]{Lemma}
\newtheorem{prop}[thm]{Proposition}
\theoremstyle{definition}
\newtheorem{algorithm}[thm]{Algorithm}
\newtheorem{conj}[thm]{Conjecture}
\newtheorem{prob}[thm]{Open Problem}
\newif\ifpgf@rectanglewrc@donecorner@
\def\pgf@rectanglewithroundedcorners@docorner#1#2#3#4{%
	\edef\pgf@marshal{%
		\noexpand\pgfintersectionofpaths
		{%
			\noexpand\pgfpathmoveto{\noexpand\pgfpoint{\the\pgf@xa}{\the\pgf@ya}}%
			\noexpand\pgfpathlineto{\noexpand\pgfpoint{\the\pgf@x}{\the\pgf@y}}%
		}%
		{%
			\noexpand\pgfpathmoveto{\noexpand\pgfpointadd
				{\noexpand\pgfpoint{\the\pgf@xc}{\the\pgf@yc}}%
				{\noexpand\pgfpoint{#1}{#2}}}%
			\noexpand\pgfpatharc{#3}{#4}{\cornerradius}%
		}%
	}%
	\pgf@process{\pgf@marshal\pgfpointintersectionsolution{1}}%
	\pgf@process{\pgftransforminvert\pgfpointtransformed{}}%
	\pgf@rectanglewrc@donecorner@true
}
	\savedmacro\cornerradius{%
		\edef\cornerradius{\pgfkeysvalueof{/pgf/rectangle corner radius}}%
	}
		\edef\pgf@marshal{%
			\noexpand\pgfpointborderrectangle
			{\noexpand\pgfqpoint{\the\pgf@xb}{\the\pgf@yb}}
			{\noexpand\pgfqpoint{\the\pgf@xc}{\the\pgf@yc}}%
		}%
		\pgfextract@process\borderpoint{}%
		\pgf@rectanglewithroundedcorners@docorner{\cornerradius}{0pt}{0}{90}%
\title{Conjugacy in Patience Sorting monoids}
\author{Alan J. Cain}
\address{Centro de Matem\'atica e Aplica\c{c}\~oes, Faculdade de Ci\^encias e
Tecnologia, Universidade Nova de Lisboa,
	2829-516 Caparica, Portugal}
\email{a.cain@fct.unl.pt}
\thanks{The first author was supported by
an Investigador FCT fellowship (IF/01622/2013/CP1161/CT0001).}
\author{Ant\'onio
Malheiro}
\address{Centro de Matem\'atica e Aplica\c{c}\~oes and Departamento de
	Matem\'atica, Faculdade de Ci\^encias e Tecnologia, Universidade Nova de
Lisboa,
	2829-516 Caparica, Portugal}
\email{ajm@fct.unl.pt}
\thanks{For the first two authors,
this work was partially supported by the Funda\c{c}\~ao para a
Ci\^encia e a Tecnologia (Portuguese Foundation for Science and Technology)
through the project UID/MAT/00297/2013 (Centro de Matem\'atica e
Aplica\c{c}\~oes), and the project PTDC/MHC-FIL/2583/2014}
\author{F\'abio M. Silva}
\address{Departamento de Matem\'atica
	and CEMAT-CI\^ENCIAS, Faculdade de Ci\^encias, Universidade de Lisboa,
Lisboa
	1749-016, Portugal.}
\email{femsilva@fc.ul.pt}
\thanks{The third author was supported by an FCT Lismath fellowship
(PD/BD/52644/2014) and partially supported by the FCT project CEMAT-Ci\^encias
UID/Multi/04621/2013}
\begin{document}

\begin{abstract}
The cyclic shift graph of a monoid is the graph whose vertices are the elements
of the monoid and whose edges connect elements that are cyclic shift related.
The Patience Sorting algorithm admits two generalizations to words, from which
two kinds of monoids arise, the $\belr$ monoid and the $\bell$ (also known as
Bell) monoid. Like other monoids arising from combinatorial objects such as
the plactic and the sylvester, the connected components of the cyclic shift
graph of the $\belr$ monoid consists of elements that have the same number of
each of its composing symbols. In this paper, with the aid of the computational
tool SageMath, we study the diameter of the connected components from the cyclic
shift graph of the $\belr$ monoid.

Within the theory of monoids, the cyclic shift relation, among other relations,
generalizes the relation of conjugacy for groups. We examine several of these
relations for both the $\belr$ and the $\bell$ monoids.

\end{abstract}

\keywords{Patience Sorting algorithm; cyclic shifts; graph; conjugacy}

\maketitle

\section{Introduction}
\label{sec:introduction}

Patience Sorting has its origins in the works of Mallows
\cite{Mallows62,10.2307/2028347} and can be regarded as an insertion algorithm
on standard words over a totally ordered alphabet $\aA_n=\{1<2<\dots<n\}$,
that is, words over $\aA_n$ containing exactly one occurrence of each of the
symbols from $\aA_n$. As noticed by Burstein and Lankham \cite{BL2005}, this
algorithm can be viewed as a non-recursive version
of Schensted's insertion algorithm. This perspective suggests that a construction
similar to the plactic monoid must also hold for this case. The plactic monoid
can be constructed as the quotient of the free monoid over $\aA$ (the infinite
totally ordered alphabet of natural numbers),
$\aA^*$, by the congruence which relates words of $\aA^*$ inserting to the
same (semistandard) Young tableaux under Schensted's insertion algorithm.

According to Aldous
and Diaconis \cite{MR1694204} we can consider two generalizations of Patience
Sorting to words, which we will call the right Patience Sorting insertion and
the left Patience Sorting insertion (\rps\ and \lps\ insertion, respectively,
for short). Considering the alphabet $\aA$, these generalizations lead to
two distinct monoids, the \rps\ monoid, denoted by $\belr$,
and the \lps\ monoid (also known in the literature as the Bell monoid
\cite{Maxime07}), denoted by $\bell$, which are,
respectively, the monoids given by the quotient of $\aA^*$ by the congruence
which relates words having the same insertion under the \rps\ and \lps\
insertion.

In a monoid $M$, two elements $u$ and $v$, are said to be related by a
cyclic  shift, denoted $u\psim v$, if there exists $x,y\in M$ such that
$u=xy$ and $v=yx$. In their seminal work concerning the plactic monoid
\cite{MR646486}, Lascoux and Sch\"utzenberger proved that 	 any two
elements in the plactic monoid, $\plac$,
having the same evaluation  (that is, elements that contain the
same number of each generating symbol) can be obtained one from the other by
applying
a finite sequence of cyclic shift relations. The same characterization is known
to hold for other plactic-like monoids, such as the hypoplactic monoid
\cite{1709.03974},  the Chinese monoid \cite{MR1847182},  the sylvester
monoid \cite{MR2081336,MR2142078}, and the taiga monoid \cite{1709.03974}.
In Section~\ref{subsection:conjugacy} we show
that an analogous result  holds for the \rps\ monoids (of finite rank)
and for the \lps\ monoid of rank $1$, $\bell_1$.
Note that all these monoids are multihomogeneous, that is, they are defined by
presentations where the two side of each defining relation contains the same
number of each generator. Thus,  the evaluation of an element of the monoid
corresponds to the evaluation of some (and hence any) word that represents it.

The previous results can be rewritten in another form by considering what we
will
call as cyclic shift graph of a monoid $M$, denoted $\K(M)$, which is the
undirected graph whose vertices are the elements of $M$ and whose edges
connect elements that differ by a cyclic shift. So, if $M=\plac$
or $M=\belr$, or their finite analogues, then the results mentioned in the
previous paragraph can
be restated as saying that the connected components of $\K(M)$ consist of
the elements of $M$ which have the same evaluation. Thus, it follows that
the connected components of $\K(M)$ are finite. With the aid of the
computational tool SageMath we studied the diameter of the connected
components of the cyclic shift graph $\K(\belr)$. In SageMath we wrote
a program based on the \rps\ insertion algorithm, which given a word of
$\aA^*$, outputs 
the connected component of $\K(\belr)$ containing the element of $\belr$
that corresponds to the evaluation of the inserted word .

Aiming to parallel the result obtained by Choffrut
and Merca{\c{s}} \cite{Choffrut2013}, and refined by Cain and Malheiro
\cite{1709.03974}, concerning the maximal diameter of connected components of
the cyclic shift graph of the plactic monoid of finite rank, we used the tools
available in the SageMath library
to construct tables containing the number of vertices and the diameter
of connected components from $\K(\belr_n)$.
The experimental results obtained from these calculations lead us to establish
some conjectures regarding diameters of specific connected components.
In Section~\ref{subsection:cyclic_shift}, we show that some of these
conjectures are in fact true.  In
particular we prove that the maximum diameter of a connected components
of $\K(\belr_n)$, for $n\geq 3$, lies   between $n-1$ and $2n-4$. We
also draw some conclusions for the diameter of $\K(\belr_n)$ for  particular
elements of $\belr_n$.

The cyclic shift relation previously defined generalizes the usual conjugacy
relation for groups. That is, when considering groups, the cyclic shift
relation is just the usual conjugacy relation. Since for monoids this
relation is, in general, not transitive, it is natural to consider the
transitive
closure of $\psim$, which we will henceforth denote by $\tpsim$. (Note that
$\tpsim$-classes correspond to connected components of the cyclic shift graph.)
We consider two other notions of  conjugacy (see \cite{Araujo201493,
araujo2015four} for other conjugacy  notions, their properties, and
relations among them). The relation $\lsim$
on $M$, proposed by Lallement in \cite{MR530552}, which can be defined as
follows: given $u,v\in M$
\begin{equation*}
u\lsim v\ \Leftrightarrow\ \exists g\in M\ ug=gv.
\end{equation*}
(There is a dual notion $\rsim$ relating elements for which $gu=vg$,
instead.) As this relation is reflexive and transitive but, in general, not
symmetric, in \cite{MR742135}, Otto considered the equivalence
relation $\osim$ given by the intersection of $\lsim$ and $\rsim$.

All the mentioned relations are equal in the group case, and in any
monoid, $\psim\ \subseteq\ \tpsim\ \subseteq\ \osim\
\subseteq\ \lsim$ (cf. \cite{Araujo201493}).
Denoting by $\evsim$ the binary relation that pairs elements with the
same evaluation, it is easy to see that for multihomogeneous monoids $\lsim\
\subseteq\ \evsim $ (cf. \cite[Lemma~3.2]{cm_conjugacy}), and thus for all the
above multihomogeneous monoids (plactic, hypoplactic, chinese, sylvester, taiga
and \rps)  we have
${\tpsim} = {\osim} = {\lsim}  = {\evsim}$. This property, is not a general
property of multihomogeneous monoids, as it is known that in the stalactic
monoid
connected components of the cyclic shift graph are properly contained in
$\evsim$ \cite[Proposition~7.2]{1709.03974}. In this paper we show that a
similar situation occurs for \lps\ monoids of rank greater than 1, since we
will prove that ${\lsim} \subsetneq {\evsim}$ in these cases.

\section{Preliminaries and notation}

In this section we introduce the fundamental notions that we will use along the
paper. For more details regarding these concepts check for instance
\cite{1706.06884}, \cite{MR1905123}, and \cite{howie1995fundamentals}.

\subsection{Words and presentations}
\label{alphabetswords}

In this paper, we denote by $\aA$ the infinite totally ordered alphabet
$\{1<2<\dots \}$, that is, the set of natural numbers with the usual order
viewed as an alphabet. For any $n\in\mathbb{N}$, the resriction of $\aA$ to the
first $n$ natural numbers is denoted by $\aA_n$.

In general, if $\Sigma$ is an alphabet, then
$\Sigma^+$ denotes the \emph{free semigroup} over $\Sigma$, that is,
the set of non-empty words over $\Sigma$, and if $\varepsilon$ denotes
the empty word, then the \emph{free monoid} over $\Sigma$ is $\Sigma^*=
\Sigma^+\cup \{\varepsilon\}$.

Next, we  define several concepts that are
directly related with the notion of word.
Let $w\in\aA^*$. Then:
\begin{itemize}
	\item a word $u\in \aA^*$,  is said to be a \emph{factor} of $w$ if
	there exist words $v_1,v_2\in \aA^*$, such that
	$w=v_1uv_2$;
	\item for any symbol $a$ in $\aA$, the number of occurrences of
$a$ in
	$w$, is denoted by $\abs{w}_{a}$;
	\item the \emph{content of} $w$, is the set $\cont(w)=\left\{{a}\in
	\aA: \abs{w}_{a}\geq 1\right\}$;
	\item the \emph{evaluation of} $w$, denoted by $\ev{w}$, is the
	sequence of non-negative integers
	whose $a$-th term is
	$\abs{w}_{a}$, for any $a\in \aA$;
	\item the word is said to be \emph{standard} if each symbol from
$\aA_n$, for a given $n$, occurs exactly once.
\end{itemize}

A \emph{monoid presentation} is a pair $(\Sigma,
\mathcal{R})$, where $\Sigma$ is an alphabet and $\mathcal{R}\subseteq
\Sigma^*\times \Sigma^*$. We say that a monoid $M$ is \emph{defined by a
presentation} $(\Sigma,\mathcal{R})$ if $M\simeq \Sigma^*/\mathcal{R}^\#$,
where $\mathcal{R}^\#$ is the smallest congruence containing $\mathcal{R}$
(see \cite[Proposition~1.5.9]{howie1995fundamentals} for a combinatorial
description of the smallest congruence containing a relation).

A presentation is \emph{multihomogeneous} if, for every relation
$(w,w') \in\mathcal{R}$, we have $\ev{w}=\ev{w'}$, in other words, if  $w$ and
$w'$ contain the same number of each of
its composing symbols. Then, a monoid is multihomogeneous if
there exists a multihomogeneous presentation defining the monoid.


\subsection{\ps\ \textit{tableaux} and insertion}

In this subsection we recall the basic concepts regarding patience sorting
\emph{tableaux}, and the insertion on such \emph{tableaux}.

A \emph{composition diagram} is a finite collection of boxes arranged in
bottom-justified columns, where no order on the length of the columns is
imposed. Let $\Sigma$
be a totally ordered alphabet. Then, an \lps\ (resp. \rps)
\emph{tableau over}  $\Sigma$ is a composition diagram with entries
from $\Sigma$, so that the sequence of entries of the boxes in each column
is strictly (resp., weakly) decreasing from top to bottom, and the sequence
of entries of the boxes in the bottom row is weakly (resp., strictly)
increasing from left to right. So, if
\begin{equation} \label{exmp2}
\ytableausetup
{aligntableaux=center, boxsize=1.25em}
R=\begin{ytableau}
\none &\none  & 4 \\
4 & 5 & 3 \\
1 & 1 & 2
\end{ytableau}\ \text{ and }\  S=\begin{ytableau}
\none & 5 \\
4 & 4   \\
1 & 3 \\
1 & 2
\end{ytableau},
\end{equation}
then $R$ is an \lps\ \emph{tableau}, and $S$ is an \rps\ \emph{tableau} both
over $\aA_n$, for $n \geq 5$. Henceforth,
we shall often refer to an \lps\ tableau or to an \rps\ tableau simply
as a \ps\ tableau, not distinguishing the cases whenever they can be dealt
in a similar way.

The left and right Patience Sorting monoids  can be
given as the quotient of the free monoid $\aA^*$ over the congruence
which relates words that yield the same \ps\ tableau under a certain algorithm
\cite[\S~3.6]{1706.06884}.
This algorithm is presented in the following paragraph and merges
in one the Algorithms 3.1 and 3.2 of \cite{1706.06884}. (Observe that
we will use the notation $\pPl()$, $\pPr()$ instead of, respectively,
$\mathfrak{R}_\ell()$, $\mathfrak{R}_r()$ used in \cite{1706.06884}.)

\begin{algorithm}[\ps\ insertion of a word]\label{alg:PSinsertion}
	~\par\nobreak
	\textit{Input:} A word $w$ over a totally ordered alphabet $\Sigma$.

	\textit{Output:} An \lps\ \emph{tableau} $\pPl(w)$ (resp., \rps\
	\emph{tableau} $\pPr(w)$).

	\textit{Method:}
	\begin{enumerate}
		\item If $w=\varepsilon$, output an empty \textit{tableau}
		$\emptyset$. Otherwise:
		\item $w=w_1\cdots w_n$, with $w_1,\ldots,w_n\in \Sigma$. Setting
		\begin{align*}
		\pPl(w_1)=\ytableausetup
		{boxsize=1.1em, aligntableaux=center}\begin{ytableau}
		w_1 \end{ytableau} =\pPr(w_1),
		\end{align*}
		then, for each remaining symbol $w_j$ with $1<j\leq n$,
		denoting by $r_1\leq \dots\leq r_k$ (resp., $r_1< \dots< r_k$)
the
		symbols in the bottom row of the \textit{tableau} $\pPl(w_1\cdots w_{j-1})$
		(resp., $\pPr(w_1\cdots w_{j-1})$), proceed as follows:
		\begin{itemize}
			\item if $r_k\leq w_j$ (resp., $r_k < w_j$),
insert $w_j$ in a new
			column to the right of $r_k$ in $\pPl(w_1\cdots w_{j-1})$ (resp., $\pPr(
			w_1\cdots w_{j-1})$);

			\item otherwise, if $m=\min\left\{i\in\{1,\ldots,
k\}:w_j< r_i\right\}$, (resp. $m=\min\left\{i\in\{1,\ldots,
k\}:w_j\leq r_i\right\}$)
			construct a new empty box on top of the column of
			$\pPl(w_1\cdots w_{j-1})$ (resp. $\pPr(w_1\cdots w_{j-1})$) containing
			$r_m$. Then bump all the symbols of the column containing $r_m$
			to the box above and insert $w_j$ in the box which has been cleared
			and previously contained the symbol $r_m$.
		\end{itemize}
		Output the resulting \emph{tableau}.
	\end{enumerate}
\end{algorithm}

Observe that the insertion of a given word $w=w_1\cdots w_n$
under Algorithm~\ref{alg:PSinsertion} is obtained through the
insertion of each of its symbols, from left to right in the
previously obtained tableaux (starting with the empty tableaux
$\emptyset$).
For instance, if $R$ is the tableau from Example~\ref{exmp2}, and $u=4511432
\in\aA^*_5$, then  $\pPl(u)=R$ (see Figure~\ref{figure:extended_insertion}).
The reader can check that $\pPr(u)=S$.

\begin{figure}[!htp]
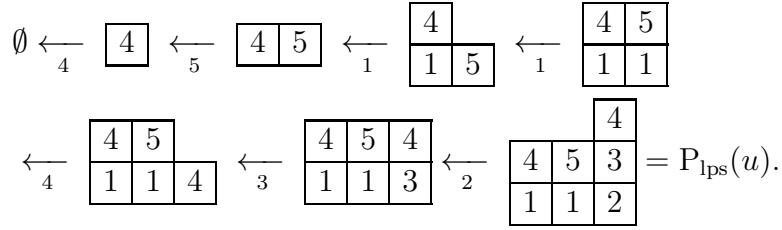

	\centering
	\begin{align*}
	&\emptyset \xleftarrow[4]{\ \ \ }\
	\ytableausetup
	{mathmode, boxsize=1.3em, aligntableaux=center}
	\begin{ytableau}
	4
	\end{ytableau}\
	\xleftarrow[5]{\ \ \ }\
	\begin{ytableau}
	4 & 5
	\end{ytableau}\
	\xleftarrow[1]{\ \ \ }\
	\begin{ytableau}
	4 & \none\\
	1 & 5
	\end{ytableau}\
	\xleftarrow[1]{\ \ \ }\
	\begin{ytableau}
	4 & 5\\
	1 & 1
	\end{ytableau}\\
	&\xleftarrow[4]{\ \ \ }\
	\begin{ytableau}
	4 & 5 & \none \\
	1 & 1 & 4
	\end{ytableau}\
	\xleftarrow[3]{\ \ \ }\
	\begin{ytableau}
	4 & 5 & 4 \\
	1 & 1 & 3
	\end{ytableau}
	\xleftarrow[2]{\ \ \ }\
	\begin{ytableau}
	\none & \none & 4\\
	4 & 5 & 3\\
	1 & 1 & 2
	\end{ytableau}=\pPl(u ).\
	\end{align*}
	\caption{\lps\ insertion of the word $w=4511432$,
		where the symbol below the arrow indicates the symbol
		that is going to be inserted on each step.}
	\label{figure:extended_insertion}
\end{figure}

\subsection{The Patience Sorting monoids}

For each $\mathrm{x}\in \{\llit,\rlit\}$, we define a binary relation
$\belxcong$ in $\aA^*$ in the following way: given $u,v\in\aA^*$,
\[ u\belxcong v\quad \textrm{iff}\quad  \pPx(u)=\pPx(v).
\]
This relation is a congruence \cite[Proposition~3.21]{1706.06884}, and the
quotient of $\aA^*$ by $\bellcong$ is the so-called \lps\ monoid, denoted
$\bell$, and the quotient of $\aA^*$ by $\belrcong$ is the \rps\ monoid which
is denoted by $\belr$. The  rank-$n$ analogues of these monoids, denoted by
$\bell_n$ and $\belr_n$, are obtained by restricting the alphabet and the
relation to the set $\aA_n^*$. Note that each equivalence class of these
monoids is represented by a unique tableau, and hence we will identify elements
of the monoid with their tableaux representation.

Words yielding the same \ps\ tableau (and hence in the same $\belxcong$-class)
have necessarily the same content, and even the same evaluation. Thus, we can
refer to the content and evaluation of an element of the monoid, and similarly
to the content and  evaluation of a tableau. Also, we shall refer to an
 element of $\belx_n$ (or to its tableau representative) as \emph{standard} if
one (and hence any) of its words in the $\belxcong$-class has
one occurrence of each of the symbols from $\aA_n$.

As shown in
\cite[\S~3.6~\&~\S~3.7]{1706.06884}, the left and right Patience Sorting
monoids are defined by the multihomogeneous
presentations $(\aA^*,\rR_\bell)$ and $(\aA^*,\rR_\belr)$, where
\begin{align*}
 \rR_\bell&=\{\,(yux,yxu): m\in \mathbb{N},\, x,y,u_1,\ldots , u_m\in
\aA, \\
 &\qquad   u=u_m\cdots u_1,\, x<y\leq
u_1< \cdots < u_m\,\}
\end{align*}
and
\begin{align*}
\rR_\belr&=\{\,(yux,yxu): m\in \mathbb{N},\, x,y,u_1,\ldots , u_m\in
\aA, \\
&\qquad  u=u_m\cdots u_1,\, x\leq y<
u_1\leq \cdots \leq u_m\,\}.
\end{align*}
Hence, the left and right Patience Sorting
monoids, and their finite rank analogues,  are multihomogeneous
monoids.

We have seen how to obtain a \ps\ tableau from a word in $\aA^*$. Now,
we explain how to pass from \ps\ tableaux to words representing
such diagrams. Given $\textrm{x}\in \{\llit,\rlit\}$ and an $x$\ps\ tableau
$P$, the
\emph{column reading of} $P$ is the word obtained from reading the entries of
the $x$\ps\ tableau $P$, column by column, from the leftmost to the rightmost,
starting on the top of each column and ending on its bottom.
For example, the column reading of the \lps\ tableau $R$ in Example~\ref{exmp2}
is $41\, 51\, 432$, while the column reading of the \rps\ tableau $S$ is $411\,
5432$.

\section{Combinatorics of cyclic shifts}
\label{Subsection4.1}
\label{subsection:cyclic_shift}

As noted in the introduction, the \emph{cyclic
shift graph of} a monoid $M$, $\K(M)$, is the undirected graph with vertex set
$M$, whose edges connect vertices that differ by a single cyclic shift.
Since, $\belr$ is a multihomogeneous monoid, we have ${\tpsim}\subseteq
{\evsim}$, and thus each connected component of
$\K(\belr)$ cannot contain elements with different
	evaluations and therefore they have finitely many vertices.

Our goal in this subsection is to study the diameter of the connected
components from $\K(\belr_n)$, which as we will show are bounded by a value
that
depends on the rank $n$.
Note that in \cite[Example~3.1]{1709.03974}, the authors provide a finitely
presented multihomogeneous monoid for which the connected components of the
cyclic shift graph have unbounded diameter. Therefore, these are not
particular cases of a more general result that holds for all multihomogeneous
monoids.

The experimental results within this subsection were obtained with the aid of
SageMath \cite{cocalc}. This computational tool allowed us to write
a program for which: given an element of $\belr_n$, provides the connected
component from the cyclic shift graph of $\belr_n$ containing that element.

The program starts by creating a vertex for each word from $\aA^*_n$
that has the same evaluation as the given element from $\belr_n$.
Afterwards, it adds edges between the words that are
cyclic shift related. Finally, by merging the vertices whose $x$\ps\
insertion is the same into a single vertex,
it constructs the connected component of the cyclic shift graph of
$\belr_n$, $\K(\belr_n)$, containing the given element from $\belr_n$.

For instance in Figure~\ref{fig:connected_component} we show the
connected component of the cyclic shift graph of $\belr_4$ containing
the element $\pPr(1234)$ that can be seen to have diameter $4$.

\begin{figure}
	\centering
	\begin{tikzpicture}[auto, node distance=2.5cm, every loop/.style={}]
	\useasboundingbox (0.5,-3.5) -- (9.5,6);
	\node[rectangle with rounded corners, draw, inner sep=3pt, minimum size=17pt]	(1) {\color{Grey}\begin{ytableau}
		{\color{black}1} & {\color{black}2} & {\color{black}3} & {\color{black}4}
		\end{ytableau}};
	\node[rectangle with rounded corners, draw=Grey, inner sep=3pt, minimum size=17pt]	(2) [right of=1] {\color{Grey}\begin{ytableau}
		{\color{black}2} & \none & \none \\
		{\color{black}1} & {\color{black}3} & {\color{black}4}
		\end{ytableau}};
	\node[rectangle with rounded corners, draw=Grey, inner sep=3pt, minimum size=17pt]	(3) [above of=2] {\color{Grey}\begin{ytableau}
		{\color{black}3} & {\color{black}4} \\
		{\color{black}1} & {\color{black}2}
		\end{ytableau}};
	\node[rectangle with rounded corners, draw=Grey, inner sep=3pt, minimum size=17pt]	(4) [below of=2] {\color{Grey}\begin{ytableau}
		{\color{black}4} & \none & \none\\
		{\color{black}1} & {\color{black}2} & {\color{black}3}
		\end{ytableau}};
	\node[rectangle with rounded corners, draw=Grey, inner sep=3pt, minimum size=17pt]	(5) [right of=2] {\color{Grey}\begin{ytableau}
		{\color{black}3} & \none\\
		{\color{black}2} & \none\\
		{\color{black}1} & {\color{black}4}
		\end{ytableau}};
	\node[rectangle with rounded corners, draw=Grey, inner sep=3pt, minimum size=17pt]	(6) [above of=5] {\color{Grey}\begin{ytableau}
		{\color{black}4} & \none\\
		{\color{black}2} & \none\\
		{\color{black}1} & {\color{black}3}
		\end{ytableau}};
	\node[rectangle with rounded corners, draw=Grey, inner sep=3pt, minimum size=17pt]	(7) [above of=6] {\color{Grey}\begin{ytableau}
		\none & {\color{black}4} &\none\\
		{\color{black}1} & {\color{black}2} & {\color{black}3}
		\end{ytableau}};
	\node[rectangle with rounded corners, draw=Grey, inner sep=3pt, minimum size=17pt]	(8) [below of=5] {\color{Grey}\begin{ytableau}
		\none & {\color{black}3} & \none\\
		{\color{black}1} & {\color{black}2} & {\color{black}4}
		\end{ytableau}};
	\node[rectangle with rounded corners, draw=Grey, inner sep=3pt, minimum size=17pt]	(9) [right of=5] {\color{Grey}\begin{ytableau}
		{\color{black}2} & {\color{black}4}\\
		{\color{black}1} & {\color{black}3}
		\end{ytableau}};
	\node[rectangle with rounded corners, draw=Grey, inner sep=3pt, minimum size=17pt]	(10) [above of=9] {\color{Grey}\begin{ytableau}
		{\color{black}4}\\
		{\color{black}3}\\
		{\color{black}2}\\
		{\color{black}1}
		\end{ytableau}};
	\node[rectangle with rounded corners, draw=Grey, inner sep=3pt, minimum size=17pt]	(11) [above of=10] {\color{Grey}\begin{ytableau}
		\none & {\color{black}4}\\
		\none & {\color{black}3}\\
		{\color{black}1} & {\color{black}2}
		\end{ytableau}};
	\node[rectangle with rounded corners, draw=Grey, inner sep=3pt, minimum size=17pt]	(12) [below of=9] {\color{Grey}\begin{ytableau}
		{\color{black}4} & {\color{black}3}\\
		{\color{black}1} & {\color{black}2}
		\end{ytableau}};
	\node[rectangle with rounded corners, draw=Grey, inner sep=3pt, minimum size=17pt]	(13) [right of=9] {\color{Grey}\begin{ytableau}
		\none & \none & {\color{black}4}\\
		{\color{black}1} & {\color{black}2} & {\color{black}3}
		\end{ytableau}};
	\node[rectangle with rounded corners, draw=Grey, inner sep=3pt, minimum size=17pt]	(14) [above of=13] {\color{Grey}\begin{ytableau}
		{\color{black}4} & \none\\
		{\color{black}3} & \none\\
		{\color{black}1} & {\color{black}2}
		\end{ytableau}};
	\node[rectangle with rounded corners, draw=Grey, inner sep=3pt, minimum size=17pt]	(15) [below of=13] {\color{Grey}\begin{ytableau}
		{\color{black}3} & \none & \none\\
		{\color{black}1} & {\color{black}2} & {\color{black}4}
		\end{ytableau}};

	\path (1) edge   node {} (2)
	(1) edge   node {} (3)
	(1) edge   node {} (4)
	(2) edge   node {} (3)
	(2) edge   node {} (4)
	(2) edge   node {} (5)
	(2) edge   node {} (6)
	(2) edge   node {} (7)
	(2) edge   node {} (8)
	(3) edge   [bend right=37pt] node {} (4)
	(3) edge   node {} (6)
	(3) edge   node {} (7)
	(5) edge   node {} (6)
	(5) edge   node {} (8)
	(5) edge   node {} (9)
	(5) edge   node {} (10)
	(5) edge   node {} (11)
	(5) edge   node {} (12)
	(6) edge   node {} (7)
	(6) edge   [bend right=30pt] node {} (8)
	(8) edge   node {} (9)
	(8) edge   node {} (12)
	(9) edge   node {} (10)
	(9) edge   [bend right=27pt] node {} (11)
	(9) edge   node {} (12)
	(9) edge   node {} (13)
	(9) edge   node {} (14)
	(9) edge   node {} (15)
	(10) edge   node {} (11)
	(13) edge   node {} (14)
	(13) edge   node {} (15)
	(14) edge   [bend left=38pt] node {} (15)
	;
	\end{tikzpicture}
	\caption{The connected component of the standard element
$\pPr(1234)$ of $\belr_4$, omitting the loops at each vertex for clarity of the
picture.}
	\label{fig:connected_component}
\end{figure}
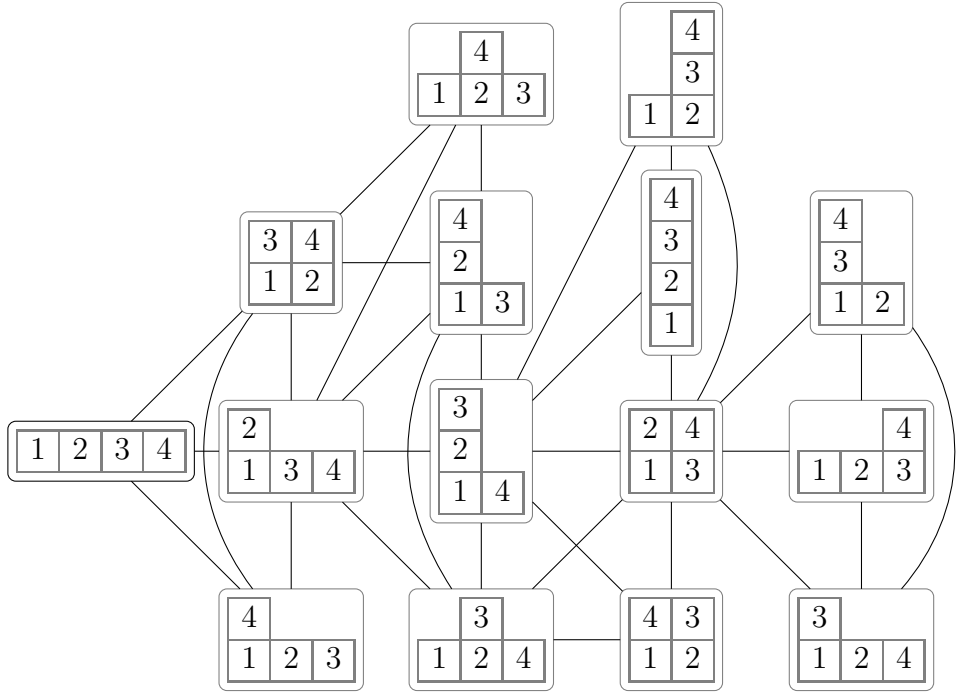

The results of computer experimentation on the diameter of connected
compontents is shown in Tables \ref{tab:std_cyclic_shift_graph} and
\ref{tab:cyclic_shift_graph}. In
Table~\ref{tab:std_cyclic_shift_graph} we present the diameter and
number of vertices in the connected component of the cyclic shift
graph of  standard elements of lengths $1$ up to $9$, whereas in
Table~\ref{tab:cyclic_shift_graph} the same information is presented
but for some (non-standard) words  of given fixed evaluations.

\begin{table}[htbp]
	\centering
	\caption{Examples of diameter and number of vertices in the connected
		component of the cyclic shift graph $\K(\belr)$ for given
evaluations
		of standard elements.}
	\begin{tabular}{ccccr}
		\toprule    \multicolumn{1}{p{5.57em}}{Length of
\newline{}standard \newline{}word} &
\multicolumn{1}{p{5.57em}}{Number of \newline{}vertices
in\newline{}connected\newline{}component} & \multicolumn{1}{p{5.57em}}{Diameter
of\newline{}connected\newline{}component} & \multicolumn{1}{p{6em}}{Diameter
as\newline{}a function of\newline{}word length} &  \\
		\toprule    1     & 1     & 0     & $n-1$   &  \\
		\midrule    2   & 2     & 1     & $n-1$   &  \\
		\midrule    3 & 5     & 2     & $2n-4$ &  \\
		\midrule    4 & 15    & 4     & $2n-4$ &  \\
		\midrule    5 & 52    & 6     & $2n-4$ &  \\
		\midrule    6 & 203   & 8     & $2n-4$ &
\\
		\midrule    7 & 877   & 10    & $2n-4$ &
 \\
		\midrule    8 & 4140  & 12    &
$2n-4$ &  \\
		\midrule    9 & 21147 & 14    &
$2n-4$ &  \\
		\bottomrule    \end{tabular}%
	\label{tab:std_cyclic_shift_graph}%
\end{table}%

The results in Table~\ref{tab:std_cyclic_shift_graph}
suggest the following:
\begin{conj}\label{conj:diameter_std}
The diameter of a connected component of $\K(\belr)$ containing a standard
element of length $n\geq 3$ is $2n-4$.
\end{conj}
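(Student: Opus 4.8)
The plan is to prove the two matching inequalities $d\le 2n-4$ and $d\ge 2n-4$, where $d$ is the diameter of the connected component $C_n$ of $\K(\belr)$ containing the standard elements of length $n$. Recall that the standard \rps\ tableaux of size $n$ are exactly the fillings of a composition diagram by $1,\dots,n$ whose columns strictly decrease and whose bottom row strictly increases; reading each column as a block, these are in bijection with the set partitions of $\{1,\dots,n\}$, which explains the Bell numbers in Table~\ref{tab:std_cyclic_shift_graph} and gives a convenient model for the vertices of $C_n$. Regarded inside $\belr_n$, the set $C_n$ is a single connected component of $\K(\belr_n)$, so the upper bound $d\le 2n-4$ is immediate from the general estimate on the maximal diameter of the components of $\K(\belr_n)$ stated in Section~\ref{sec:introduction}; thus the whole content of the statement is the lower bound $d\ge 2n-4$.

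For the lower bound I would work with the explicit pair $R_n$, the single-row tableau (the all-singletons partition), and $D_n$, the partition $\{1\},\{2\},\dots,\{n-2\},\{n-1,n\}$; for $n=4$ these are precisely the two vertices realising the diameter $4$ in Figure~\ref{fig:connected_component} (those labelled $1$ and $13$), and the goal is to show $d(R_n,D_n)\ge 2n-4$. A clean first reduction is the observation that deleting the symbol $1$ from a word and relabelling $k\mapsto k-1$ is a well-defined homomorphism $\belr\to\belr$: checking it on each defining relation $yux\mapsto yxu$ (a $1$ can occur only as $x$ or $y$, and in every such case deleting the $1$'s turns both sides into the same word) shows it respects $\belrcong$. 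A monoid homomorphism preserves the cyclic-shift relation $\psim$, and so maps edges of $\K(\belr)$ to edges or loops; hence it cannot increase distances. Since this map carries $(R_n,D_n)$ to $(R_{n-1},D_{n-1})$, an induction on $n$ immediately yields $d(R_n,D_n)\ge d(R_{n-1},D_{n-1})$.

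The difficulty --- and, I suspect, the reason the statement is only conjectured --- is that this homomorphism argument loses exactly two units: it gives only $d(R_n,D_n)\ge 2n-6$, and the missing $+2$ must come from the two ``outermost'' symbols $n-1,n$, whose gluing into the single block $\{n-1,n\}$ ought to force two cyclic shifts that the deletion map collapses to nothing. Turning this into a proof seems to require a potential function $\phi$ on set partitions that changes by at most $1$ under every cyclic shift and satisfies $\phi(D_n)-\phi(R_n)=2n-4$. The obstruction to writing one down is real: a single cyclic shift can perform a simultaneous double merge --- for instance $1\,2\cdots n=(1\,2)(3\,4\cdots n)\mapsto(3\,4\cdots n)(1\,2)$ reinserts, by Algorithm~\ref{alg:PSinsertion}, to the partition $\{1,3\},\{2,4\},\{5\},\dots,\{n\}$, dropping the number of columns by two in one step (this is the edge $1\!-\!3$ when $n=4$) --- so every invariant built additively from the arcs of the partition is only $2$-Lipschitz, and a direct check shows that already for $n=4$ no fixed assignment of weights to arcs reproduces the distance from $R_4$ at all.

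I would therefore aim for a $\phi$ that is sensitive to the \emph{positions} of the merged blocks rather than merely to their number, weighting the arc joining the minima of two merged blocks so that each admissible cyclic shift is forced to change $\phi$ monotonically by one unit, and verify the $1$-Lipschitz bound by a case analysis over the shape of the factorisation $\alpha\beta$ dictated by the \rps\ bumping rule. The hardest case to control is exactly the double merge above, where one must show that the two arcs created contribute with opposite signs, so that $\phi$ still moves by a single unit; making this cancellation precise for all $n$ is where I expect the real work --- and the current gap between the provable $2n-6$ and the conjectured $2n-4$ --- to lie.
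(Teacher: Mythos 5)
The statement you set out to prove is Conjecture~\ref{conj:diameter_std}: the paper itself does not prove it, and neither does your proposal. What the paper actually establishes is only the bracketing $n-1\le d\le 2n-4$: the upper bound is Lemma~\ref{prop:diameter_upper_bound} (every element of the component is connected to the distinguished element with $1,\dots,n-1$ stacked in its first column by at most $n-2$ cyclic shifts), and the lower bound $n-1$ is Lemma~\ref{prop:diameter_lower_bound}, proved via the cocharge-sequence invariant, which changes by at most $1$ in each term under a single cyclic shift and separates $\pPr(12\cdots n)$ from $\pPr(n(n-1)\cdots 1)$. Your upper-bound half is correct and is exactly the paper's Lemma~\ref{prop:diameter_upper_bound} (it is proved in Section~\ref{subsection:cyclic_shift}, not merely stated in the introduction). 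The genuine gap is the entire lower-bound half $d\ge 2n-4$, which you candidly admit you do not have: no $1$-Lipschitz potential $\phi$ with $\phi(D_n)-\phi(R_n)=2n-4$ is actually constructed, only the observation that one would be needed; your double-merge example $34\cdots n\,12$ is computed correctly and does show why column-counting invariants fail, but identifying the obstruction is not the same as overcoming it. Since the conjecture asserts an exact value, the missing inequality is precisely the open content of the statement.

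Two further quantitative points. First, your deletion map (erase all symbols $1$, relabel $k\mapsto k-1$) is indeed a well-defined, distance-nonincreasing endomorphism of $\belr$ --- your case check on the relations $\rR_\belr$ is right, since a $1$ can only occur as $x$ or $y$ --- and it does send $(R_n,D_n)$ to $(R_{n-1},D_{n-1})$. But the resulting recursion $d(R_n,D_n)\ge d(R_{n-1},D_{n-1})$ loses the two units you need at every step, so it cannot propagate $2(n-1)-4$ to $2n-4$; your phrase ``the provable $2n-6$'' is an overstatement, because that bound is conditional on already knowing the conjecture at rank $n-1$. Unconditionally, this argument yields only a constant lower bound equal to the largest directly verified case (e.g.\ $d\ge 4$ from Figure~\ref{fig:connected_component}, or $d\ge 14$ from the length-$9$ row of Table~\ref{tab:std_cyclic_shift_graph}), which for large $n$ is weaker than the paper's $n-1$ from cocharge sequences. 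If you want to pursue the program you sketch, the natural move is a position-sensitive refinement of the cocharge statistic (which is the paper's own $1$-Lipschitz invariant) rather than an arc-weighted statistic built from scratch; as it stands, the statement remains a conjecture supported by the computations in Table~\ref{tab:std_cyclic_shift_graph}.
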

Note that the connected components of  $\K(\belr)$ and $\K(\bell)$ coincide
when restricted to standard elements.

The data gathered in both Table~\ref{tab:std_cyclic_shift_graph} and
Table~\ref{tab:cyclic_shift_graph} leads us to propose the following:
\begin{conj}\label{conj:diameter_nonstd}
	The diameter of a connected component of $\K(\belr)$  containing an
element with  $n\geq 3$ symbols, with possible multiple appearences of
each symbol,  lies between	$n-1$ and $2n-4$.
\end{conj}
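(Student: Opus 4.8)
The plan is to prove the two bounds separately. Throughout, fix a connected component $C$ of $\K(\belr)$; since the component is a $\tpsim$-class and $\tpsim\,=\,\evsim$ for $\belr$, every tableau in $C$ has the same evaluation, so the number of distinct symbols $n=\abs{\cont(T)}$ is a well-defined invariant of $C$. I read the statement as asserting $n-1\le\mathrm{diam}(C)\le 2n-4$ where $n$ is this number of distinct symbols.

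\textbf{Upper bound.} My strategy for $\mathrm{diam}(C)\le 2n-4$ is to bound the \emph{radius}: I would single out a ``hub'' element $H\in C$ and show every element reaches $H$ in at most $n-2$ cyclic shifts, so that $d(S,T)\le d(S,H)+d(H,T)\le 2(n-2)=2n-4$ for all $S,T\in C$. The experiment in Figure~\ref{fig:connected_component} guides the choice: there the tableau $\begin{ytableau}3\\2\\1&4\end{ytableau}$ has eccentricity $2=n-2$, whereas the extremal single-row and single-column tableaux have eccentricity $4$ and $3$. For general $n$ the natural candidate is the tableau $H$ whose leftmost column reads $a_{n-1},\dots,a_1$ from top to bottom (the $n-1$ smallest distinct symbols, with remaining copies packed so as to respect the $\belr$ column and row conditions) and whose largest symbol $a_n$ opens the second column. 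The heart of the argument is a reachability lemma: for each $T\neq H$ I would exhibit a factorization $T=xy$ whose cyclic shift $yx$ ``corrects one further symbol'' into the canonical first column, together with a potential $\mu\colon C\to\{0,\dots,n-2\}$ with $\mu(H)=0$ that strictly decreases under this move. Since placing $a_1,\dots,a_{n-1}$ needs at most $n-2$ corrections (the smallest being automatically in place), the eccentricity of $H$ is at most $n-2$.

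\textbf{Lower bound.} For $\mathrm{diam}(C)\ge n-1$ it suffices to exhibit two elements at distance at least $n-1$. I would take the ``most spread'' representative $R$, with $a_1<\dots<a_n$ along a single bottom row and repeated copies stacked above, and the ``most stacked'' representative $L$, obtained by sorting all symbols into one weakly decreasing column, and argue $d(R,L)\ge n-1$ via a quantity $\phi$ that changes by at most $1$ across every cyclic-shift edge while $\abs{\phi(R)-\phi(L)}=n-1$ — equivalently, a graph homomorphism from $C$ onto a reflexive path with $n-1$ edges.

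\textbf{Main obstacle.} The construction of such a $\phi$ is where I expect the real difficulty to lie, because the obvious geometric statistics are \emph{not} $1$-Lipschitz for the cyclic-shift graph. The number of columns of $\pPr(w)$ (the length of the longest strictly increasing subsequence of $w$) can jump by more than one under a single shift: $\pPr(123456)$ has six columns, yet its cyclic shift $\pPr(456123)$ has three; and the number of rows is just as unstable, since from a single column one shift can collapse the height from $n$ to about $n/2$ (e.g.\ $654321\psim 321654$). A subtler global invariant therefore seems necessary. Failing that, I would fall back on a \emph{volume bound}: the standard components are enumerated by the Bell numbers $1,2,5,15,52,\dots$ of Table~\ref{tab:std_cyclic_shift_graph}, and each vertex has degree polynomial in the word length, giving $\mathrm{diam}(C)\ge\log_{\Delta}\abs{C}=\Theta(n)$. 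Pinning the constant down to exactly $n-1$, and in particular handling components with large multiplicities where this volume estimate degrades, is the delicate point I anticipate being hardest.
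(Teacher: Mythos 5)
Be aware first that this statement is a conjecture in the paper: the authors prove the upper bound in full (Lemma~\ref{prop:diameter_upper_bound}) but the lower bound only for components containing a \emph{standard} element (Lemma~\ref{prop:diameter_lower_bound}); the lower bound for arbitrary evaluations is left open. Your upper-bound plan is exactly the paper's: the hub you guess from Figure~\ref{fig:connected_component} is the paper's element $w'=\pPr\bigl((n-1)^{\abs{w}_{n-1}}\cdots 2^{\abs{w}_2}1^{\abs{w}_1}n^{\abs{w}_n}\bigr)$, and the paper likewise bounds its eccentricity by $n-2$ and doubles. But your ``reachability lemma'' is precisely where all the content lies, and you leave it as a promise: the paper proves it by an explicit induction on the column reading, factoring it as $u\,k\,v$ just before the first occurrence in the \emph{second} column of the next symbol $k$ (resp.\ $i+1$) and replacing $\pPr(u)\pPr(kv)$ by $\pPr(kv)\pPr(u)$, checking at each step that the first column has become $k^{\abs{w}_k}\cdots 2^{\abs{w}_2}1^{\abs{w}_1}$ because every symbol of $v$ is $\geq k$ and the small symbols of $u$ occur in decreasing order. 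A potential function $\mu$ alone does not substitute for exhibiting these factorizations and verifying what the shifted product inserts to, so as written the upper bound is a correct strategy but not a proof.

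The lower bound is the genuine gap, and you correctly sense why: you need a statistic that is $1$-Lipschitz along cyclic-shift edges and separates the one-row and one-column elements by $n-1$, and the naive statistics fail (your column/row counterexamples are right). The paper's invariant is the \emph{cocharge sequence}: it is a $\belrcong$-invariant of standard words (Lemma~\ref{standardinvariant}), its entries change by at most $1$ under a single cyclic shift (quoting \cite[Lemma~2.2]{1709.03974}), and $\coch(\pPr(12\cdots n))=(0,\dots,0)$ while $\coch(\pPr(n\cdots 21))=(0,1,\dots,n-1)$, giving distance at least $n-1$. Crucially, cocharge is only defined for standard words, which is exactly why the paper's lower bound covers only standard components and the general case remains conjectural; your proposal does not supply any replacement invariant for words with repeated symbols. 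Your fallback volume argument cannot close this: even granting a polynomial degree bound it yields only $\Theta(n)$ up to constants (and the component sizes for non-standard evaluations are not the Bell numbers), so it can never pin down the exact threshold $n-1$.
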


\begin{table}[htbp]
	\centering
	\caption{Examples of diameter and number of vertices in the connected
		component of the  cyclic shift graph $\K(\belr)$ for given
evaluations of
		non-standard elements.}
	\begin{tabular}{cccc}
		\toprule
		Evaluation & \multicolumn{1}{p{5.5em}}{Number of \newline{}vertices in\newline{}connected\newline{}component} & \multicolumn{1}{p{5.355em}}{Diameter of\newline{}connected\newline{}component} & \multicolumn{1}{p{6.145em}}{Diameter as\newline{}a function of\newline{}evaluation\newline{}length} \\
		\toprule
		(5)     & 1     & 0     & $n-1$ \\
		\midrule
		(5,3)   & 4     & 1     & $n-1$ \\
		\midrule
		(4,1,4) & 20    & 2     & $n-1=2n-4$ \\
		\midrule
		(3,3,1,2) & 75    & 3     & $n-1=2n-5$ \\
		\midrule
		(1,2,4,2) & 287   & 4     & $n=2n-4$ \\
		\midrule
		(1,3,2,1,2) & 656   & 5     & $n=2n-5$ \\
		\midrule
		(2,1,1,2,3) & 554   & 4     & $n-1=2n-6$ \\
		\midrule
		(1,2,1,2,2) & 711   & 6     & $n+1=2n-4$ \\
		\midrule
		(1,1,1,3,1,2) & 2409  & 7     & $n+1=2n-5$ \\
		\midrule
		(1,1,2,2,1,2) & 2840  & 6     & $n=2n-6$ \\
		\midrule
		(1,2,1,1,2,2) & 2373  & 8     & $n+2=2n-4$ \\
		\midrule
		(1,1,1,1,2,1,2) & 6499  & 9     & $n+2=2n-5$ \\
		\midrule
		(1,1,1,2,1,1,2) & 6078  & 8     & $n+1=2n-6$ \\
		\midrule
		(1,1,1,1,1,2,2) & 6768  & 10    & $n+3=2n-4$ \\
		\midrule
		(1,1,1,1,1,2,1,1) & 11695 & 11    & $n+3=2n-5$ \\
		\midrule
		(1,1,1,1,2,1,1,1) & 11224 & 10    & $n+2=2n-6$ \\
		\midrule
		(1,1,1,1,1,1,2,1) & 12002 & 12    & $n+4=2n-4$ \\
		\bottomrule
	\end{tabular}%
	\label{tab:cyclic_shift_graph}
\end{table}

One of the first results that was possible to obtain from the data was
\begin{lem}
	\label{prop:diameter12}
	All elements of $\belr$ containing two
	symbols,  with the same evaluation, form a connected
component of $\K(\belr)$. Furthermore, the component has
diameter $1$.
\end{lem}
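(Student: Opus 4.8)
The plan is to reduce to the two-letter alphabet $\{1<2\}$ and then exhibit, for every pair of elements in the claimed component, an \emph{explicit} single cyclic shift relating them. First I would note that, since $\belr$ is multihomogeneous, ${\tpsim}\subseteq{\evsim}$, so the set $E$ of all elements whose evaluation is the given (two-symbol) one is a union of connected components of $\K(\belr)$; it therefore suffices to prove that $E$ is connected of diameter~$1$. Because \rps\ insertion and the cyclic shift relation depend only on the relative order of the letters, I may assume the two symbols are $1<2$ and write the evaluation as $(p,q)$ with $p=\abs{w}_1\ge 1$ and $q=\abs{w}_2\ge 1$.

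Next I would classify the \rps\ tableaux with $p$ ones and $q$ twos. Each column is weakly decreasing, so it is a block of $2$'s on top of a block of $1$'s and its bottom entry is its minimum; as the bottom row is strictly increasing, at most one column can have bottom entry $1$ and at most one can have bottom entry $2$. Hence there are at most two columns: all $p$ ones lie at the foot of the left column, and the twos split between that left column and an all-$2$ right column. This gives exactly the family $T_0,\dots,T_q$, where $T_j$ has left column $2^{q-j}1^{p}$ and right column $2^{j}$ (with $T_0$ the single column $2^{q}1^{p}$). In particular $\abs{E}=q+1\ge 2$, consistent with the value $4$ recorded for evaluation $(5,3)$ in Table~\ref{tab:cyclic_shift_graph}.

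The key step is to choose good word representatives. Reading $T_j$ by columns gives the word $w_j=2^{q-j}1^{p}2^{j}$, and a short run of Algorithm~\ref{alg:PSinsertion} confirms $\pPr(w_j)=T_j$ (inserting $2^{q-j}1^{p}$ builds the left column, the first of the remaining twos opens the right column, and the rest bump into it). The crucial observation is that all the $w_j$ are cyclic rotations of the single word $2^{q}1^{p}$: for $i<j$ I would split $w_i=\alpha\beta$ with $\alpha=2^{\,j-i}$ and $\beta=2^{\,q-j}1^{p}2^{\,i}$, so that $\beta\alpha=2^{\,q-j}1^{p}2^{\,j}=w_j$. Writing $x,y\in\belr$ for the elements represented by $\alpha,\beta$, we get $T_i=xy$ and $T_j=yx$, whence $T_i\psim T_j$.

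Since this applies to every pair $i\neq j$, all $q+1$ elements of $E$ are pairwise adjacent in $\K(\belr)$, so $E$ is a single connected component of diameter at most $1$; as $\abs{E}=q+1\ge 2$, the diameter is exactly $1$. I expect the only genuine work to be the classification of the two-symbol tableaux and the verification $\pPr(w_j)=T_j$, both mechanical; the substantive point --- the one that forces the diameter to be $1$ rather than merely finite --- is the recognition that the representatives $w_j$ are all cyclic rotations of one another, so that any two differ by a single cyclic shift.
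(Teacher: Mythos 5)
Your proposal is correct and follows essentially the same route as the paper: reduce to content $\{1,2\}$, observe every element of the component has the canonical form $\pPr(2^a1^p2^b)$, and realize any two such elements as $xy$ and $yx$ by splitting off a power of $2$ (your observation that all representatives are cyclic rotations of $2^q1^p$ is exactly the paper's two-case splitting $u=\pPr(2^{i-l})\pPr(2^l1^j2^k)$, $v=\pPr(2^l1^j2^k)\pPr(2^{i-l})$). The only cosmetic difference is that you first enumerate the whole component as $T_0,\dots,T_q$, which the paper does not need to do explicitly.
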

\begin{proof}
	As already noticed each connected component of
$\K(\belr)$ cannot contain elements with different
	evaluations.
	Let $u$ and $v$ be two elements of $\belr$ with the same
evaluation
	such that $\abs{\cont(w)}= 2$. Suppose without loss of generality that
	$\cont(w)=\{1,2\}$. Then, these elements are of the form $\pPr(2^i1^j2^k)$,
	for some $i, k\in \mathbb{N}_0$ and $i+k,j\in\mathbb{N}$. So,
$u=\pPr(2^i1^j2^k)$
	and $v=\pPr(2^l1^n2^m)$ with
	$j=n$ and $i+k=l+m$. Therefore, $v=\pPr(2^l1^j2^m)$ and we consider the following
	cases:

	If $i\geq l$, then $k+i-l=m$. Setting $x=\pPr(2^{i-l})$ and $y=\pPr(2^l1^j2^k)$,
	we have
	\begin{align*}
	&u=\pPr(2^{i-l}2^l1^j2^k)=\pPr(2^{i-l})  \pPr(2^l1^j2^k)=x  y\,
\text{ and}\\
	&v=\pPr(2^l1^j2^k2^{i-l})=\pPr(2^l1^j2^k)  \pPr(2^{i-l})=y  x.
	\end{align*}

	Otherwise, if $i<l$, then $m+l-i=k$. Setting
	$x=\pPr(2^{i}1^{j}2^{m})$ and $y=\pPr(2^{l-i})$, we get
	\begin{align*}
	&u=\pPr(2^{i}1^{j}2^{m}2^{l-i})=\pPr(2^{i}1^{j}2^{m})
\pPr(2^{l-i})=x  y\, \text{ and}\\
	&v=\pPr(2^{l-i}2^{i}1^{j}2^{m})=\pPr(2^{l-i})
\pPr(2^{i}1^{j}2^{m})=y  x.
	\end{align*}
	In both cases, $u\psim v$. Therefore, the
	diameter of the connected component from $\K(\belr)$ containing such
	elements is 1. The result follows.
\end{proof}

In the following lemma we provide an upper bound for the diameter
of the connected components from $\K(\belr)$ of elements whose content
is greater or equal to $3$, thus answering the upper bound part of
Conjecture~\ref{conj:diameter_nonstd}.

By observing several connected components obtained with the program constructed
with SageMath, we
concluded that for any element $w\in \belr$, with $\cont(w)=\{1,\ldots,
n\}$ and $n\geq 3$, the element
\[\pPr\left((n-1)^{{\abs{w}}_{n-1}}
(n-2)^{{\abs{w}}_{n-2}}\cdots 3^{\abs{w}_3}
2^{\abs{w}_2}1^{\abs{w}_1}\ n^{{\abs{w}}_{n}}\right)\]
plays a key role in the connected component of $\K(\belr)$ which contains $w$.
For instance, in Figure~\ref{fig:connected_component},
we see that the element
\[\pPr(3214)={\color{Grey}\begin{ytableau}
{\color{black}3} & \none\\
{\color{black}2} & \none\\
{\color{black}1} & {\color{black}4}
\end{ytableau}}\]
is in the center of the connected component. Using this insight we were able
to prove the following result:

\begin{lem} \label{prop:diameter_upper_bound}
All elements of $\belr$ containing $n\geq 3$ symbols,  with the same
evaluation, form a connected
component of $\K(\belr)$. Furthermore, the component has
diameter  at most $2n-4$.
\end{lem}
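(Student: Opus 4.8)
The plan is to reduce everything to the \emph{central element}
\[
C \;=\; \pPr\left((n-1)^{\abs{w}_{n-1}}(n-2)^{\abs{w}_{n-2}}\cdots 2^{\abs{w}_2}1^{\abs{w}_1}\,n^{\abs{w}_n}\right),
\]
which depends only on the evaluation and is therefore common to every element of the class under consideration. Inserting the letters in this order forces $(n-1),\dots,1$ to pile up, by repeated bumping, into a single left column read $(n-1)^{\abs{w}_{n-1}}\cdots 1^{\abs{w}_1}$ from top to bottom, after which the copies of the maximal letter $n$ all open or bump into the rightmost column; this is the two-column shape of the central vertex $\pPr(3214)$ highlighted in Figure~\ref{fig:connected_component}. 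Since a connected component of $\K(\belr)$ cannot contain elements of different evaluation, it suffices to show that $C$ lies in the component of every element $u$ with $\cont(u)=\{1,\dots,n\}$, and more precisely that the distance satisfies $d(u,C)\le n-2$. The triangle inequality then gives $d(u,v)\le d(u,C)+d(C,v)\le 2(n-2)=2n-4$ for any two such $u,v$, which simultaneously proves that they form a single connected component and that its diameter is at most $2n-4$.

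The heart of the argument is the eccentricity bound $d(u,C)\le n-2$, which I would prove by induction on the size $n$ of the content, peeling off the maximal symbol. The base case $n=3$ is checked directly by analysing the finitely many tableau shapes with three-element content, where $C$ turns out to be adjacent to all other elements, so $d(u,C)\le 1$. For the inductive step ($n\ge 4$) the first move is a single cyclic shift that gathers all copies of $n$ into the rightmost column: choosing a representative word and the factorisation that separates a prefix from the block of maximal letters, one reaches an element of the form $\pPr\left(v\,n^{\abs{w}_n}\right)$, whose tableau is the \rps\ tableau of a word $v$ over $\{1,\dots,n-1\}$ with an appended rightmost column of $n$'s. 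The aim is then to reach $C$ in $n-3$ further shifts by invoking the inductive hypothesis on the data governed by the smaller content $\{1,\dots,n-1\}$, which would give the total bound $1+(n-3)=n-2$.

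The main obstacle is exactly this reduction step, because in $\belr$ the maximal symbol does not commute with smaller ones: one has $\pPr(n\,p)\neq\pPr(p\,n)$ in general, so a cyclic shift performed on the sub-element over $\{1,\dots,n-1\}$ need not lift to a \emph{single} cyclic shift of the full element $\pPr\left(v\,n^{\abs{w}_n}\right)$; moreover the operation ``append an $n$-column'' does not send the central element of the smaller problem to $C$ (it sends instead the fully sorted single column there). Consequently one cannot merely quote the rank-$(n-1)$ statement on an isomorphic subgraph, and the geodesics towards $C$ genuinely exploit the extra freedom of the larger monoid, namely the possibility of moving the block $n^{\abs{w}_n}$ through the word. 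To overcome this I would prove explicit lemmas, built from the defining relations $\rR_\belr$, describing how prepending or appending a block of the maximal letter interacts with insertion and with a single cyclic shift, so that each transformation of the reduced data can be realised by exactly one shift of the full element while the $n$-block is kept under control. This bookkeeping, rather than the high-level strategy, is where the real work lies and is what should pin the count at $n-2$; it is also what makes the bound survive at the base, since spending the gathering shift must not cost an extra unit there.
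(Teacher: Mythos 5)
Your high-level strategy coincides with the paper's: both arguments single out the same evaluation-determined element $w'=\pPr\left((n-1)^{\abs{w}_{n-1}}\cdots 2^{\abs{w}_2}1^{\abs{w}_1}n^{\abs{w}_n}\right)$ and deduce connectedness and the bound $2n-4$ from an eccentricity bound $d(u,w')\le n-2$ via the triangle inequality. The problem is that your proof of that eccentricity bound --- which is the entire content of the lemma --- is missing, and its first step is actually wrong. You propose induction on $n$ by peeling off the maximal symbol, starting with ``a single cyclic shift that gathers all copies of $n$ into the rightmost column'', i.e.\ reaching an element $\pPr\left(v\,n^{\abs{w}_n}\right)$ with $v\in\aA_{n-1}^*$. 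This fails in general: take $w=\pPr(414243)$ in $\belr_4$, whose tableau has three columns, each a $4$ sitting on top of $1$, $2$, $3$ respectively. No defining relation of $\rR_\belr$ applies to the word $414243$, so it is the unique word representing $w$; the only single cyclic shifts available therefore come from the five proper factorisations of this word, and a direct check shows that none of the resulting elements $\pPr(qp)$ has its three symbols $4$ in one column, whereas every element of the form $\pPr\left(v\,4^{3}\right)$ does. So the induction cannot start as described. Moreover, you yourself concede that even after such a gathering step the inductive hypothesis cannot be invoked (appending the $n$-column neither intertwines with cyclic shifts of the rank-$(n-1)$ element nor sends the smaller central element to $w'$), and you defer the repair to unspecified ``explicit lemmas''. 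As written, nothing beyond the easy reduction to the eccentricity bound is actually established.

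For contrast, the paper never inducts on the maximal symbol; it builds the path to $w'$ from the bottom up. Writing the column reading of $w$ as $ukv$, where $k$ is the bottom symbol of the second column and $u$ is the prefix ending just before that occurrence of $k$, the single shift from $\pPr(u)\pPr(kv)$ to $\pPr(kv)\pPr(u)$ yields an element whose first column already contains every occurrence of every symbol at most $k$ (and $k\ge 2$); each later shift cuts just before the first occurrence of the next value $i+1$ in the second column and absorbs that value into the first column. After at most $n-2$ shifts the first column holds all symbols up to $n-1$, and the remaining $n$'s are then forced into a single further column, which is exactly $w'$. If you want to keep your maximal-symbol route, you would have to formulate and prove the transport lemmas you allude to, and the counterexample above shows they cannot take the one-shift form your sketch assumes.
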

\begin{proof}
	Let $w$ be an element of $\belr$ with $\abs{\cont(w)}=n\geq 3$.
	Suppose without loss of generality that $\cont(w)=\{1,\ldots, n\}$.
	Since each connected component of $\K(\belr_n)$
	cannot contain elements with different evaluations, to prove this
	result, it suffices to check that from $w$, by applying at most $n-2$
cyclic
shift relations we can always
	obtain the element
	\[w'=\pPr\left((n-1)^{\abs{w}_{n-1}} (n-2)^{
	\abs{w}_{n-2}}\cdots 2^{\abs{w}_2}1^{\abs{w}_1}
	n^{\abs{w}_{n}}\right)\]
	of $\belr$.

	We will construct a path in $\K(\belr_n)$ from $w$ to $w'$ of length at
most $n-2$. We aim to find a sequence $w_0,w_1,\ldots,w_{n-2}$ of elements of
$\belr_n$ such that $w=w_0$, $w'=w_{n-2}$, and $w_i\psim w_{i+1}$, for
$i=0,\ldots, n-3$. The construction is inductive.
	First note that all the symbols $1$ occur in the bottom of the first column
	of $w$. If $w$ has only one column, then $w$ has column reading
	$n^{\abs{w}_{n}}(n-1)^{\abs{w}_{n-1}}
(n-2)^{\abs{w}_{n-2}}\cdots 2^{\abs{w}_2}1^{\abs{w}_1}$ and applying one cyclic
shift we get the intended result. Suppose $w$ has at least two columns.
	Let $k$ (necessarily $k\geq 2$) be the bottom symbol of the
second column of $w$. Observe that any symbol $j$ less than $k$ must lie in the
first column of $w$. Set $w=w_0=\dots =w_{k-1}$.
	We calculate the element $w_k$ from $w$ in the following way. Consider
the   column reading $ukv$, of $w$, for $u,v\in \aA_n^*$, where  $u$ is the
prefix just up to before the first
occurrence of a symbol
		$k$ occurring in the second column. Fix
$w_k=\pPr(kv) \pPr(u)$. Note that $w=\pPr(u) \pPr(kv)$ and so $w\psim
w_k$.

	Then, the first column of $w_k$ has column reading
$k^{\abs{w}_k}\dots 2^{\abs{w}_2}1^{\abs{w}_1}$, because all symbols in $v$ are
greater or equal to $k$, and symbols in $u$ that are strictly less than $k$
appear in decreasing order.
	For $i\in\{k, \ldots, n-2\}$, let $w_{i}=\pPr\left(u\right) \pPr
	\left((i+1)v\right)$ where $u$ is the prefix of the column reading of
$w_{i}$
	up to just before the first occurrence of a symbol $i+1$ (in the second
column) and let $w_{i+1}=\pPr
	\left((i+1)v\right)  \pPr\left(u\right)$.
	Using this process we ensure that the first column of $w_{i+1}$ is
precisely \[\pPr\left((i+1)^{\abs{w}_{i+1}} i^{\abs{w}_i}
	\cdots 2^{\abs{w}_2} 1^{\abs{w}_1}\right).\]
	The result follows by induction.
\end{proof}

Regarding the lower bound of Conjecture~\ref{conj:diameter_nonstd}, we are only
able to establish it for standard elements
of $\K(\belr)$. To prove such a result, we will use the notion
of cocharge sequence for standard words over $\aA$ and follow an approach
similar to the one used in the case of the plactic monoid in \cite{1709.03974}.
 Note that it will be sufficient to prove the result for standard
words over the alphabet $\aA_n$.

For any standard word $w$ over $\aA_n$, the \emph{cocharge labels} from the
symbols
of $w$ are calculated as follows:
\begin{itemize}
	\item draw a circle, place a point $*$ somewhere on its circumference, and,
	starting from $*$, write $w$ anticlockwise around the circle;
	\item let the cocharge label of the symbol $1$ be $0$;
	\item iteratively, suppose the cocharge label of the symbol $a$
	from $w$ is $k$, then proceed clockwise from the symbol $a$ to
	the symbol $a+1$ and:
	\begin{itemize}
		\item if the symbol $a+1$ of $w$ is reached without passing the
		point $*$, then the cocharge label of $a+1$ is $k$;
		\item otherwise, if the symbol $a+1$ is reached after passing the
		point $*$, then the cocharge label of $a+1$ is $k+1$.
	\end{itemize}
\end{itemize}
The \emph{cocharge sequence} of  a standard word $w$, $\coch(w)$,
is the sequence of the cocharge labels from the symbols of $w$, whose
$a$-th term is the cocharge label from the symbol $a$ of $w$. So, it
follows from the definition that if $w$ is a standard word over
$\aA_n$, then $\coch(w)$ is a sequence of length $n$.

For example, the labelling of the standard word $w=4572631$ over
$\aA_7$, proceeds in the following way
\begin{center}
\begin{tikzpicture}

\draw (0,0) circle[radius=5mm];

\draw[->] (4:-4mm) arc[radius=4mm,start angle=-170,end
angle=-10];

\draw[decorate,decoration={text along
	path,text={$w$},text align={align=center}}]
(-90:2mm);

\draw[gray,thick,->] (-18:13.5mm) arc[radius=13mm,start angle=-10,end
angle=-170];

\draw[gray,decorate,decoration={text along
	path,text={Labelling},text color=gray,text align={align=center}}]
(-165:18mm) arc[radius=18mm,start angle=-170,end angle=-10];

\foreach\i/\ilabel in {0/*,9/4,8/5,7/7,6/2,5/6,4/3,3/1} {
	\node at ($ (90-\i*30:7.8mm) $) {$\ilabel$};
};
\foreach\i/\ilabel in {9/2,8/2,5/2,7/3,3/0,6/1,4/1} {
	\node[font=\footnotesize,gray] at ($ (90-\i*30:11.2mm) $) {$\ilabel$};
};
\end{tikzpicture}
\end{center}
and thus $\coch(w)=(0,1,1,2,2,2,3)$.

From the definition it also follows that the cocharge sequence is a
weakly increasing sequence which starts at $0$ and such that each of
the remaining terms is either equal to the previous term or greater
by $1$.

\begin{lem}\label{standardinvariant}
	For   standard words $u,v$ over $\aA_n$, if
$u\belrcong v$,
	then $\coch(u)=\coch(v)$.
\end{lem}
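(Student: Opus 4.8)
The plan is to use that $\belrcong$ is the congruence generated by the multihomogeneous relation $\rR_\belr$, reducing the claim to invariance of $\coch$ under a single defining relation, and then to recast $\coch$ as a statement purely about the relative order of consecutive-valued symbols. For the reduction: if $u \belrcong v$, there is a chain $u = z_0, z_1, \ldots, z_k = v$ in which each $z_{i+1}$ comes from $z_i$ by rewriting one factor using a relation of $\rR_\belr$ in one direction or the other. Since every relation of $\rR_\belr$ preserves evaluation and $u$ is standard, each $z_i$ has the evaluation of $u$ and hence is itself a standard word over $\aA_n$; so it suffices to prove $\coch(w) = \coch(w')$ whenever $w = \gamma\,(yux)\,\delta$ and $w' = \gamma\,(yxu)\,\delta$ for a relation $(yux, yxu) \in \rR_\belr$ with $u = u_m\cdots u_1$. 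As $w$ is standard its symbols are distinct, so in fact $x < y < u_1 < \cdots < u_m$.

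Next I would reformulate $\coch$ combinatorially. Tracing the circle definition, the label of $a+1$ exceeds that of $a$ by exactly $1$ precisely when, travelling from $a$ to $a+1$, one crosses the point $*$; for symbols written once around the circle this happens iff the symbol $a+1$ occurs strictly before the symbol $a$ in the linear word. Thus $\coch(w)_a = \#\{\,b : 1 \le b < a,\ b+1 \text{ precedes } b \text{ in } w\,\}$, so $\coch(w)$ is determined entirely by the relative order of each pair of consecutive-valued symbols $b, b+1$.

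Finally I would compare $w$ and $w'$ directly. Passing from $w = \gamma\,y\,u_m\cdots u_1\,x\,\delta$ to $w' = \gamma\,y\,x\,u_m\cdots u_1\,\delta$ moves only the single symbol $x$: the mutual order of all symbols other than $x$ is unchanged, and $x$ keeps its order relative to everything in $\gamma$, to $y$, and to everything in $\delta$. The only precedences that flip are those between $x$ and each $u_i$. But $x < y < u_1 \le u_i$ forces $u_i - x \ge 2$, so no pair $\{x, u_i\}$ consists of consecutive values; hence no precedence between consecutive-valued symbols changes, and by the reformulation $\coch(w) = \coch(w')$.

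The hard part will be pinning down the reformulation exactly: the circle definition travels in one rotational sense to write $w$ and (for the labelling) examines the arc from $a$ to $a+1$, and one must correctly decide whether crossing $*$ corresponds to ``$a+1$ before $a$'' or ``$a+1$ after $a$'' in the linear word; checking against the worked example $w = 4572631$, where $\coch(w)=(0,1,1,2,2,2,3)$, fixes the direction. Everything after that is short precedence bookkeeping, whose crux is the value gap $u_i - x \ge 2$ guaranteeing that $x$ and $u_i$ are never consecutive.
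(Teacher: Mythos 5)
Your proposal is correct and follows essentially the same route as the paper: reduce to a single application of a defining relation of $\rR_\belr$ (noting all intermediate words stay standard), and observe that only the symbol $x$ changes position relative to the $u_i$, so the relative order of each pair of consecutive-valued symbols $a-1,a$ is preserved and hence every cocharge label is unchanged. You merely make explicit two points the paper leaves implicit — the precedence reformulation of $\coch$ and the value gap $u_i - x \ge 2$ — which is fine.
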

\begin{proof}
	It is enough to show that any two standard words
	over $\aA_n$ such that one is obtained from the other by applying a
	relation from $\rR_{\belr_n}$ have the same cocharge sequence.

	So, there is a factor $yu_i\cdots u_1x$ of one of the standard words,
	with $x,y,u_1,\ldots,u_i\in\aA_n$ and $x<y< u_1<\cdots <u_i$, that is
	changed to the factor $yxu_i\cdots u_1$ of the other. Given any symbol
	$a\in \aA_n\setminus\{1\}$, when applying such relation, the relative
	position between the symbols $a$ and $a-1$ is not changed. That is, if
	$a-1$ occurs to the right (resp. left) of $a$ in one of the standard
	words, then $a-1$ also occurs to the right (resp. left) of $a$ in the
	other. Thus, equal symbols of these standard words have the same
	cocharge label and therefore the cocharge sequence of these words is
	the same.
\end{proof}

Given a standard element $u$ of $\belr_n$ on $n$ generators, let $\coch(u)$ be
$\coch(w)$
for any word $w\in \aA^*_n$ such that $\pPr(w)=u$. Using the previous
lemma we conclude that $\coch(u)$ is well-defined.

\begin{lem}\label{prop:diameter_lower_bound}
	The diameter of a connected component of
	$\K(\belr_n)$, with $n\geq 2$, containing a standard element is at
least $n-1$.
\end{lem}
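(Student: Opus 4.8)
The plan is to exhibit a \emph{slowly-varying integer invariant} on the standard elements of $\belr_n$: one that is well-defined on $\tpsim$-classes, changes by at most $1$ along each edge of $\K(\belr_n)$, and yet takes values that are $n-1$ apart on two elements of the same connected component. For this I would take $\phi(u) := \coch(u)_n$, the last (hence, by the remark after the definition of $\coch$, the largest) cocharge label of a standard element $u$. By Lemma~\ref{standardinvariant} the cocharge sequence, and in particular $\phi$, is well-defined on standard elements of $\belr_n$. A short reading of the labelling rule shows that the increment $\coch(w)_{a+1}-\coch(w)_a$ equals $1$ precisely when $a+1$ occurs to the left of $a$ in $w$, so that
\[
\phi(u)=\#\{a\in\{1,\dots,n-1\}: a+1 \text{ occurs to the left of } a \text{ in any } w \text{ with } \pPr(w)=u\}.
\]

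The central step is to bound the change of $\phi$ across one edge. Suppose $u\psim v$ with $u,v$ standard, say $u=xy$ and $v=yx$ in $\belr_n$. Since $\belr_n$ is multihomogeneous and $\ev{u}$ is the all-ones vector on $\{1,\dots,n\}$, the contents $\cont(x)$ and $\cont(y)$ must partition $\{1,\dots,n\}$; choosing words $a,b$ representing $x,y$ gives $u=\pPr(ab)$ and $v=\pPr(ba)$, where $a$ and $b$ use complementary alphabets $S$ and $S^c$. Comparing the two words $ab$ and $ba$ pair by pair $(k,k+1)$, the relative order of $k$ and $k+1$ changes only when one lies in $S$ and the other in $S^c$, and an easy case check yields
\[
\phi(v)-\phi(u)=\#\{k: k\in S,\ k+1\in S^c\}-\#\{k: k\in S^c,\ k+1\in S\}.
\]
Colouring the values $1,2,\dots,n$ according to membership in $S$ or $S^c$, the two counts on the right are the numbers of $S\to S^c$ and $S^c\to S$ colour changes along the line $1,2,\dots,n$; since these necessarily interleave, they differ by at most $1$. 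Hence $|\phi(u)-\phi(v)|\le 1$ for every edge of $\K(\belr_n)$.

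With this in hand the conclusion is immediate. The elements $\pPr(12\cdots n)$ and $\pPr(n(n-1)\cdots 1)$ are standard elements of $\belr_n$ with the same evaluation, so by Lemma~\ref{prop:diameter12} (for $n=2$) and Lemma~\ref{prop:diameter_upper_bound} (for $n\ge 3$) they lie in the same connected component of $\K(\belr_n)$. Their $\phi$-values are $0$ and $n-1$ respectively, directly from the characterisation above. Because $\phi$ changes by at most $1$ along each edge, any path joining them has length at least $|\phi(\pPr(12\cdots n))-\phi(\pPr(n\cdots 1))|=n-1$, so the diameter of that component, and hence of a connected component containing a standard element, is at least $n-1$.

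I expect the only genuinely delicate point to be the bound $|\phi(u)-\phi(v)|\le 1$, and within it two things need care: first, justifying that a monoid cyclic shift of a standard element is realised at the word level by rotating two blocks on complementary alphabets (this uses multihomogeneity together with the fact that $\coch$, and thus $\phi$, does not depend on the chosen representative word); and second, the combinatorial observation that the $S\to S^c$ and $S^c\to S$ transitions along $1,\dots,n$ differ by at most $1$. Everything else — well-definedness via Lemma~\ref{standardinvariant}, the values of $\phi$ at the two extremal elements, and the standard telescoping argument turning a per-edge bound into a distance bound — is routine.
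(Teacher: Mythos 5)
Your proposal is correct and follows essentially the same route as the paper: the cocharge sequence, well-defined on standard elements by Lemma~\ref{standardinvariant}, is used as a slowly-varying invariant, and the distance between $\pPr(12\cdots n)$ and $\pPr(n(n-1)\cdots 1)$, which lie in one connected component by Lemmata~\ref{prop:diameter12} and~\ref{prop:diameter_upper_bound}, is bounded below by the gap $n-1$ in the last cocharge label. The only difference is that where the paper invokes \cite[Lemma~2.2]{1709.03974} for the fact that a single cyclic shift changes corresponding cocharge terms by at most $1$, you prove this per-edge bound directly (for the last term) via the complementary-alphabet decomposition $u=\pPr(ab)$, $v=\pPr(ba)$ and the interleaving of the two kinds of colour changes along $1,\dots,n$, which is a correct, self-contained substitute for that citation.
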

\begin{proof}
The case $n=2$ follows from Lemma~\ref{prop:diameter12}. Suppose $n\geq 3$.

	From \cite[Lemma~2.2]{1709.03974}, we deduce
	that any two standard elements of $\belr$ that differ by a single
cyclic
	shift, have cocharge sequences whose corresponding terms differ by at most
	$1$.

	The standard elements $u=\pPr\left(1\cdot 2\cdots
n\right)$ and 	$v=\pPr\left(n(n-1)\cdots 1\right)$ of $\belr_n$ are in the
same connected
	component of $\K(\belr)$ by Lemma~\ref{prop:diameter_upper_bound}. Now,
notice that $\coch(u)=(0,0,\ldots,0)$ and
	that $\coch(v)=(0,1,\ldots,n-1)$. Since the last term of both sequences
	differs by $n-1$, the standard  elements $u$ and $v$ are
	at distance of at least $n-1$.
\end{proof}

For instance, in Figure~\ref{fig:connected_component}, the distance between the
elements
\[\pPr(1234)={\color{Grey}\begin{ytableau}
	{\color{black}1} & {\color{black}2} & {\color{black}3} & {\color{black}4}
	\end{ytableau}}\ \text{ and }\ \pPr(4321)={\color{Grey}\begin{ytableau}
	{\color{black}4} \\
	{\color{black}3} \\
	{\color{black}2} \\
	{\color{black}1}
	\end{ytableau}}\]
in that connected component is precisely $3$, which is in accordance with
the previous result.

Since for standard words the cyclic shif graphs $\K(\bell)$ and $\K(\belr)$
coincide, the previous result also give us a lower bound for
connected components of standard words of $\K(\bell)$.

Combining Lemmata~\ref{prop:diameter12}, \ref{prop:diameter_upper_bound} and
\ref{prop:diameter_lower_bound}
we get
\begin{thm}
 \begin{enumerate}
  \item Connected components of $\K(\belr)$ coincide with $\evsim$-classes of
$\belr$.
  \item The maximum diameter of a connected component of $\K(\belr_n)$ is
$n-1$, for $n=1,2$, and lies between $n-1$ and $2n-4$, for $n\geq 3$.
 \end{enumerate}
\end{thm}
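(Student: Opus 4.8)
The plan is to assemble the three preceding lemmata; no new combinatorial input is needed, only some care in reducing the rank-$n$ statement to the content-based statements already proved for $\belr$.

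For part (1), I would first recall the containment $\tpsim \subseteq \evsim$ noted at the start of the section, which already shows that every connected component of $\K(\belr)$ is contained in a single $\evsim$-class. For the reverse inclusion I would show that any two elements of the same evaluation lie in one connected component, splitting according to the size of their common content: if the content has a single symbol then the $\evsim$-class is a singleton and there is nothing to prove; if the content has exactly two symbols the claim is Lemma~\ref{prop:diameter12}; and if the content has $n\geq 3$ symbols it is Lemma~\ref{prop:diameter_upper_bound}. Hence each $\evsim$-class is exactly one connected component, which is part (1).

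For part (2), I would first dispose of the small cases. For $n=1$ every $\evsim$-class of $\belr_1$ is a singleton, so every component is a single vertex of diameter $0=n-1$. For $n=2$ the only non-trivial components have content $\{1,2\}$ and, by Lemma~\ref{prop:diameter12}, diameter exactly $1=n-1$; the remaining components (content of size one) are single vertices, so the maximum diameter is $n-1$. For $n\geq 3$ I would establish the two bounds separately. For the upper bound, a component of $\K(\belr_n)$ has content some subset of $\aA_n$, say of size $k\leq n$, so by Lemma~\ref{prop:diameter_upper_bound} (when $k\geq 3$), Lemma~\ref{prop:diameter12} (when $k=2$), or triviality (when $k\leq 1$) its diameter is at most $\max\{2k-4,\,1,\,0\}\leq 2n-4$; thus every component has diameter at most $2n-4$. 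For the lower bound it suffices to exhibit a single component of diameter at least $n-1$, and the component containing a standard element of $\belr_n$ does this by Lemma~\ref{prop:diameter_lower_bound}. Combining the two bounds yields the claim.

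The only point requiring mild care—and the closest thing to an obstacle—is the passage between $\belr$ and its finite-rank analogue $\belr_n$, since Lemmata~\ref{prop:diameter12} and~\ref{prop:diameter_upper_bound} are phrased for $\belr$ while the diameter statement in (2) concerns $\K(\belr_n)$. Because $\belr$ is multihomogeneous, products of elements with content inside $\aA_n$ again have content inside $\aA_n$, so any cyclic shift $u\psim v$ witnessed in $\belr$ between two such elements is witnessed by factors also lying in $\belr_n$. Consequently the connected component of an element of $\belr_n$, and hence its diameter, is the same whether computed in $\K(\belr)$ or in $\K(\belr_n)$. With this identification the content-based lemmata apply verbatim to the rank-$n$ components, and the theorem follows.
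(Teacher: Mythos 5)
Your proposal is correct and follows essentially the same route as the paper, which obtains the theorem precisely by combining Lemmata~\ref{prop:diameter12}, \ref{prop:diameter_upper_bound} and \ref{prop:diameter_lower_bound}, with the $\tpsim\subseteq\evsim$ containment handling one inclusion in part (1). Your additional care about the trivial content-size cases and about passing between $\K(\belr)$ and $\K(\belr_n)$ via multihomogeneity is a sound elaboration of details the paper leaves implicit, not a different argument.
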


Other observations from computer experimental results
 lead us to conclude that the number of
vertices in a given connected component is equal to the number of vertices in
the connected component that has one more symbol $1$. This makes sense since
the elements of the new connected component will be the elements of the
former with an additional symbol 1 in the bottom of the first column.

Also, it seems that in a standard component, the addition of a new symbol $1$
leads to
a connected component whose diameter can possibly decrease by 2 when
compared with the original. In fact,
we were able to establish the following result:

\begin{lem}
	Let $w$ be an element of $\belr$, with $n\geq 4$ symbols, such that the
minimum symbol of $w$ has at least two occurences, and the second smallest
symbol only occurs once. Then the diameter
	of the connected component of $\K(\belr)$ containing $w$ is at most
$2n-6$.
\end{lem}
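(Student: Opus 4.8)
The plan is to re‑run the inductive path construction from the proof of Lemma~\ref{prop:diameter_upper_bound}, but to shorten it by one cyclic shift using the hypothesis that the second smallest symbol occurs only once, so that every vertex of the component reaches the central element in at most $n-3$ steps rather than $n-2$. Since the component is an $\evsim$-class, I may assume $\cont(w)=\{1,\ldots,n\}$ and take as target the central element $w'=\pPr\!\left((n-1)^{\abs{w}_{n-1}}\cdots 2^{\abs{w}_2}1^{\abs{w}_1}n^{\abs{w}_n}\right)$, whose first column reads $(n-1)^{\abs{w}_{n-1}}\cdots 3^{\abs{w}_3}\,2\,1^{\abs{w}_1}$ from top to bottom (here $\abs{w}_2=1$) and whose second column is $n^{\abs{w}_n}$. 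Recall that for an element whose second column has bottom entry $k\ge 2$ the construction of Lemma~\ref{prop:diameter_upper_bound} yields a path to $w'$ of length $n-k$; a one‑column element reaches $w'$ in a single shift, and the case $n=2$ is covered by Lemma~\ref{prop:diameter12}. As all copies of $1$ sit at the bottom of the first column, the bottleneck $k=2$ occurs \emph{precisely} when the unique symbol $2$ is the bottom entry of the second column, so these are the only vertices for which the old estimate is $n-2$ rather than $\le n-3$.

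For such a vertex $X$ the idea is to absorb the initiation step into the first incorporation step. Writing the column reading of $X$ as $u\,2\,v$ with $u$ the prefix preceding the unique $2$, the plain first shift $X\mapsto\pPr(2v)\pPr(u)$ only deposits $2\,1^{\abs{w}_1}$ at the bottom of the first column; instead I would cut just before the first $3$ of the second column, writing the reading as $u'\,3\,v'$ (the lone $2$ now lying inside $u'$), and pass to $\pPr(3v')\pPr(u')$. The claim to be verified is that, because there is a single $2$ to carry, this one shift already produces an element whose first column reads $3^{\abs{w}_3}\,2\,1^{\abs{w}_1}$ at the bottom, so that the induction effectively starts at level $3$ and only the symbols $4,5,\ldots,n-1$ remain to be incorporated, costing $n-4$ further shifts for a total of $1+(n-4)=n-3$. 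Combined with the cases $k\ge 3$ and one column, this would give $d(X,w')\le n-3$ for every vertex $X$, whence $d(X,Y)\le d(X,w')+d(w',Y)\le 2(n-3)=2n-6$ for all $X,Y$.

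The step I expect to be the main obstacle is exactly the merge above: proving that the modified shift $X\mapsto\pPr(3v')\pPr(u')$ simultaneously lifts the block of $3$'s and the solitary $2$ into the first column in a single move, without spawning an extra column. This is where $\abs{w}_2=1$ is essential, since with one $2$ the \rps‑insertion (Algorithm~\ref{alg:PSinsertion}) slots $2$ directly below the $3$'s and above the $1$'s, whereas two or more $2$'s would, by strict increase along the bottom row, be forced apart into separate columns and break the merge. I would isolate this as a short tracking argument through Algorithm~\ref{alg:PSinsertion}, following only the first column and the bottom row across the single shift, and then feed the resulting element into the unchanged inductive tail of Lemma~\ref{prop:diameter_upper_bound}.
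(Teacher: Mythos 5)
Your reduction is fine up to the bottleneck case, but the merge step that is supposed to handle it fails, and with it the whole bound. You cut the ordinary column reading just before ``the first $3$ of the second column'' and claim that the single shift to $\pPr(3v')\pPr(u')$ lands all the $3$'s and the lone $2$ on top of the $1$'s in the first column. In an \rps\ tableau whose second column has bottom entry $2$, the $3$'s need not lie in the second column at all: they may sit in the first column above the $1$'s, or in the third column. Take $n=4$ and $X=\pPr(31124)$, whose columns are $(3,1,1)$, $(2)$, $(4)$: the unique $2$ is the bottom of the second column, but the unique $3$ is in the first column, so your cut does not exist (cutting at the first $3$ of the reading gives $u'=\varepsilon$ and a trivial shift). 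Worse, the only words inserting to $X$ are $31124$, $31214$, $31241$, and applying $\pPr$ to every cyclic rotation of these never yields your target $w'=\pPr(32114)$; hence $X$ is at distance at least $2$ from the centre of Lemma~\ref{prop:diameter_upper_bound}, while your budget is $n-3=1$. So the claim that every vertex lies within $n-3$ of that centre is false, and the triangle-inequality conclusion $2n-6$ collapses. (Even when a $3$ does head the second column the merge can fail: from the element with columns $(3,1,1),(3,2),(4)$ your shift produces the element with columns $(3,2,1,1),(4,3)$, whose first column is not $3^{\abs{w}_3}2\,1^{\abs{w}_1}$ because the second $3$ re-inserts outside the first column. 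Also, with your cut the lone $2$ lies in $v'$, not in $u'$ as you assert, though that slip is cosmetic by comparison.)

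The paper's proof changes exactly the two ingredients you kept. It aims at a different central element, $\pPr\bigl(1^{\abs{w}_1}(n-1)^{\abs{w}_{n-1}}\cdots 3^{\abs{w}_3}2\,n^{\abs{w}_n}\bigr)$, in which the $1$'s occupy the first column alone, the symbols $2,\dots,n-1$ form the second column with the $2$ at its bottom, and the $n$'s form the third column; and it introduces \emph{delayed column readings}: since $\abs{w}_1\geq 2$, one bottom $1$ can be read later without changing the inserted tableau (the bottom row is unchanged because another $1$ remains), so a single cyclic shift can transport that held-back $1$ together with a whole block of larger symbols. In the example above, the delayed reading $31241$ cut as $3124\mid 1$ gives $\pPr(13124)=\pPr(11324)$, the paper's centre, in one shift. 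Your proposal never uses the hypothesis $\abs{w}_1\geq 2$ (only $\abs{w}_2=1$), and that hypothesis, via delayed readings and the relocated centre, is precisely the missing idea; without it the bound you can prove is no better than the $2n-4$ of Lemma~\ref{prop:diameter_upper_bound}.
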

\begin{proof}
Without lost of generality, suppose that  $\cont(w)=\{1,\dots ,n\}$, with
$n\geq 4$.
The proof strategy is similar to the proof of
Lemma~\ref{prop:diameter_upper_bound}. We aim to
construct a path in $\K(\belr)$ from $w$ to
\[w'=\pPr\left(1^{\abs{w}_1}
	(n-1)^{\abs{w}_{n-1}}(n-2)^{\abs{w}_{n-2}}\cdots 3^{\abs{w}_3}2
n^{\abs{w}_n}\right)\]
by applying at most $n-3$ cyclic shifts relations.

For an element $w$ of $\belr$, under the given assumptions, we will distinguish
 particular readings of its tableau representation. For simplicity, we call
these readings  \emph{delayed
column readings}. Note that the symbol $1$
occurs more than once, and that all symbols $1$ appear on the bottom of the
first column of such tableaux. If we proceed as in the column reading, but we
read the symbol on the bottom of the first column (necessarily a symbol $1$)
latter on,  we obtain a delayed column reading. Following
Algorithm~\ref{alg:PSinsertion}, it is clear that all these words corresponding
to delayed column readings also insert to the same element. For example,
the element $S$ of \eqref{exmp2} has column reading $411\, 5432$ and has
delayed column readings, $4151432$, $4154132$, $4154312$, and $4154321$.

If the tableau representation of $w$ has only one column, then it has the form
\[\pPr\left(
	n^{\abs{w}_n}(n-1)^{\abs{w}_{n-1}}(n-2)^{\abs{w}_{n-2}}\cdots
3^{\abs{w}_3}2
1^{\abs{w}_1}\right)\]
which is cyclic shift related to
\[\pPr\left(
	(n-1)^{\abs{w}_{n-1}}(n-2)^{\abs{w}_{n-2}}\cdots
3^{\abs{w}_3}2
1^{\abs{w}_1}n^{\abs{w}_n}\right)\]
which in turn has delayed column reading
\[(n-1)^{\abs{w}_{n-1}}(n-2)^{\abs{w}_{n-2}}\cdots
3^{\abs{w}_3}2
1^{\abs{w}_1-1}n^{\abs{w}_n}1.\]
By applying a cyclic shift we get the intended form since
\[w'= \pPr\left(
1(n-1)^{\abs{w}_{n-1}}(n-2)^{\abs{w}_{n-2}}\cdots
3^{\abs{w}_3}2
1^{\abs{w}_1-1}n^{\abs{w}_n}\right).\]

Otherwise, suppose first that the bottom symbol of the second column is $2$.
Note that the symbol $3$ can appear in the first three columns of $w$, and if
it appears in the third column, then its bottom symbol is a $3$. Consider the
delayed column reading of $w$, $u13v$, where $u$ is the prefix up to before the
first occurence of a symbol $3$ in the rightmost column where a symbol $3$
appears (necessarily on the first three columns). So, either $u$ or $v$ has the
unique symbol $2$, and if $u$ or $v$ has the symbol $2$ then all symbols $3$
appear to its left.
Let $w_3=\pPr(13v) \pPr(u)$, and so $w\psim
w_3$, since $w=\pPr(u) \pPr(13v)$. The first column of $w_3$ has
column reading $1^{\abs{w}_1}$ and the
second column $3^{\abs{w}_3}2$.

Now suppose the bottom symbol of the second column is $k>2$. Consider the
delayed column reading of $w$, $u1kv$, where $u$ is the prefix up
to before the first occurence of a symbol $k$ in the second column. Note that
all symbols in $v$ are
greater or equal to $k$, and symbols in $u$ that are strictly less than $k$
appear in decreasing order (from left to right).
Let $w_k=\pPr(1kv) \pPr(u)$, and so $w\psim
w_k$. The first column of $w_k$ has column reading $1^{\abs{w}_1}$ and the
second column $k^{\abs{w}_{k}}\cdots
3^{\abs{w}_3}2 $.

We will construct a path in $\K(\belr_n)$ from $w_k$ to $w'$ of length at
most $n-4$, by considering a sequence $w_k,\ldots,w_{n-1}$ of elements of
$\belr_n$, with $k\geq 3$, such that $w'=w_{n-1}$, and $w_i\psim w_{i+1}$, for
$i=k,\ldots, n-1$. For $i\in\{k, \ldots, n-2\}$, let
$w_{i}=\pPr\left(u\right) \pPr
	\left(1(i+1)v\right)$ where $u$ is the prefix of the delayed column
reading $u1(i+1)v $ of $w_{i}$
	up to just before the first occurrence of a symbol $i+1$ (on the third
column) and let $w_{i+1}=\pPr\left(1(i+1)v\right)  \pPr\left(u\right)$.
Note that all symbols in $v$ are greater or equal to $i+1$, and all symbols in
$u$ that are strictly less than $i+1$  appear in decreasing order (from left to
right).
	Thus the two first columns of $w_{i+1}$ have column readings
$1^{\abs{w}_1}$ and $(i+1)^{\abs{w}_{i+1}} i^{\abs{w}_i}\dots
3^{\abs{w}_3}2$, respectively.
	The result follows by induction.
\end{proof}

\section{Conjugacy in the \lps\ and \rps\ monoids}
\label{subsection:conjugacy}

Restating the results of Section~\ref{subsection:cyclic_shift} in terms of the
conjugacy relation $\psim$ we have shown that in $\belr_n$  we have
${\psim}={\evsim}$,  for
$n\in\{1,2\}$; and that ${\psim}\subsetneq {\tpsim}={\evsim}$, for $n>2$. Thus,
${\tpsim}={\evsim}$ in the (infinite rank) right Patience Sorting monoid.
In all cases, we deduce that any of the conjugacy relations ${\tpsim}$,
${\osim}$, and ${\lsim}$ coincides with ${\evsim}$.

The \rps\ case proves to be distinct from the \lps\ case. In $\bell_1$, it is
immediate that ${\psim}={\evsim}$, but for $n\geq 2$,  we will see that
${\psim}\subsetneq
{\tpsim}$ and
${\lsim}\subsetneq {\evsim}$, in $\bell_n$, and thus in $\bell$. Whether the
inclusion
${\tpsim}\subseteq {\lsim}$ is strict
or, in fact an equality, is left as an open question.

\begin{prop}
	For any $n\geq 2$, in $\bell_n$ we have ${\psim}\subsetneq {\tpsim}$.
\end{prop}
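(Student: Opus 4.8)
The plan is to prove strictness by exhibiting, already in $\bell_2$, a pair of elements that are $\tpsim$-related but not $\psim$-related, and then lifting this witness to $\bell_n$ for every $n\geq 2$. The main tool is the reformulation that, in a \lps\ monoid, $u\psim v$ holds if and only if some word representing $u$ admits a cyclic rotation representing $v$: writing $u=xy$ with $x=\pPl(s)$ and $y=\pPl(t)$ gives $u=\pPl(st)$ and $v=yx=\pPl(ts)$, and $ts$ ranges exactly over the cyclic rotations of $st$ as the splitting point varies. I would first record this equivalence, together with the fact that over the two-letter alphabet $\{1,2\}$ the congruence $\bellcong$ is generated by the single relation $221\equiv 212$ (the only nontrivial instance of $\rR_\bell$ supported on $\{1,2\}$, since $x<y$ forces $x=1,y=2$ and then $m\leq 1$). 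Consequently the $\bellcong$-class of a word over $\{1,2\}$ is computed purely combinatorially.

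Next I would fix the three elements $a=\pPl(21112)$, $b=\pPl(21211)$ and $c=\pPl(12112)$ of $\bell_2$, namely
\[
a=\begin{ytableau}2&\none&\none&\none\\1&1&1&2\end{ytableau}\,,\qquad
b=\begin{ytableau}2&2&\none\\1&1&1\end{ytableau}\,,\qquad
c=\begin{ytableau}\none&2&\none&\none\\1&1&1&2\end{ytableau}\,,
\]
all of evaluation $(3,2)$, where the displayed words are the column readings. Splitting $21112=(2111)(2)$ and rotating gives $22111$, which also inserts to $b$, so $a\psim b$; splitting $21211=(2)(1211)$ and rotating gives $12112$, which inserts to $c$, so $b\psim c$. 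Hence $a\tpsim c$. I would then establish $a\not\psim c$. By the reformulation it suffices to check that no word representing $a$ has a cyclic rotation representing $c$. Since $21112$ contains no factor $221$ or $212$, its $\bellcong$-class is the singleton $\{21112\}$, and likewise the class of $c$ is $\{12112\}$; thus $a\psim c$ would force $12112$ to be a cyclic rotation of $21112$. This fails, because the two occurrences of the symbol $2$ are cyclically adjacent in $21112$ but not in $12112$, and cyclic adjacency is a rotation invariant. Therefore $a\not\psim c$, giving ${\psim}\subsetneq{\tpsim}$ in $\bell_2$.

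Finally I would transfer the example to $\bell_n$ for arbitrary $n\geq 2$ by viewing $a,b,c$ as elements of $\bell_n$ supported on $\{1,2\}$. The path $a\psim b\psim c$ persists verbatim, so $a\tpsim c$ in $\bell_n$. For the negative direction, if $a\psim c$ in $\bell_n$ then $a=\pPl(st)$ and $c=\pPl(ts)$ for some words $s,t$ over $\aA_n$; but $\cont(st)=\cont(a)=\{1,2\}$ forces $s$ and $t$ to be words over $\{1,2\}$, reducing to the two-letter case already excluded. Hence ${\psim}\subsetneq{\tpsim}$ in $\bell_n$ for all $n\geq 2$. The main obstacle is precisely this negative step, namely ruling out \emph{every} single cyclic shift between $a$ and $c$; choosing witnesses whose $\bellcong$-classes are singletons is what keeps that verification finite, reducing it to the single rotation invariant above rather than an unbounded search through factorisations.
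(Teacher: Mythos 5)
Your proof is correct, but it takes a genuinely different route from the paper's. The paper argues in two cases: for $n\geq 3$ it invokes the cocharge-sequence lower bound (Lemma~\ref{prop:diameter_lower_bound}, transferred to $\bell_n$ via the coincidence of the two graphs on standard elements), which places $\pPl(12\cdots n)$ and $\pPl(n\cdots 21)$ in the same connected component at distance at least $n-1\geq 2$, hence $\tpsim$-related but not $\psim$-related; for $n=2$ it exhibits the pair $\pPl(21121)$, $\pPl(21112)$ and only asserts, without the detail you supply, that no single cyclic shift connects them. You instead use one rank-$2$ witness pair, $\pPl(21112)$ and $\pPl(12112)$, verify the chain of cyclic shifts explicitly, and make the negative step fully rigorous: over $\{1,2\}$ the only defining relation of $\bell$ is $221=212$, so both witness classes are singletons, and a single cyclic shift would force $12112$ to be a cyclic rotation of $21112$, which the cyclic-adjacency invariant of the two symbols $2$ excludes. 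You then lift the example to every $\bell_n$ by the evaluation/content argument forcing any factorization witnesses $s,t$ into $\{1,2\}^*$ --- a step the paper does not need, since its rank-$2$ example is used only in rank $2$, but which your version handles correctly. What each approach buys: yours is elementary, uniform in $n$, independent of the cocharge machinery, and closes the ``easy to check'' gap; the paper's route for $n\geq 3$ gives the stronger quantitative statement that elements of the same component can be at distance at least $n-1$, information that feeds the diameter bounds of Section~\ref{subsection:cyclic_shift}.
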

\begin{proof}
	From Lemma~\ref{prop:diameter_lower_bound}, we deduce that ${\psim}
	\subsetneq {\tpsim}$, for $\bell_n$ with $n\geq 3$.

	Regarding the $\bell_2$ case, consider the elements $\pPl(21121)$ and
	$\pPl(21112)$ of $\bell_2$. We have that \begin{align*}
	\pPl(21121)&=\pPl(211) \pPl(21)\psim
\pPl(21) \pPl(211)=\pPl(21211)\\
	&=\pPl(22111)=\pPl(2) \pPl(2111)\psim \pPl(2111) \pPl(2)\\
	&=\pPl(21112),
	\end{align*}
	and so $\pPl(21121)\ \tpsim\ \pPl(21112)$ in $\bell_2$. It is easy to check that
	$\pPl(21121)\npsim \pPl(21112)$ in $\bell_2$. Indeed, notice that the unique words
	$u$ and $v$ of $\aA^*_2$ such that $\pPl(u)=\pPl(21121)$ and $\pPl(v)=\pPl(21112)$
	are precisely, $21121$ and $21112$, respectively. Moreover, if
$\pPl(21121)=\pPl(st)$, for
	words $s,t\in\aA_2^*$, then	$\pPl(ts)\neq \pPl(21112)$.

	Resuming, we have a pair of elements of $\bell_2$ which belong to $\tpsim$
	but not to $\psim$.
\end{proof}

In order to prove that ${\lsim}\subsetneq {\evsim}$, in $\bell_n$, we first
prove two auxiliary
results.

\begin{lem}
For any $k,n\in \mathbb{N}$ and $u,v\in \bell_k$, if $n\geq k$, then:
\begin{equation*}
	u\lsim v \text{ in } \bell_n\ \Leftrightarrow\ u\lsim v \text{ in }
	\bell_k.
\end{equation*}
\end{lem}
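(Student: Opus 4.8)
The plan is to realise the equivalence through a pair of homomorphisms between $\bell_k$ and $\bell_n$: the inclusion $\iota\colon\bell_k\to\bell_n$ induced by $\aA_k\subseteq\aA_n$, and a retraction $\bar\pi\colon\bell_n\to\bell_k$ obtained by erasing every symbol exceeding $k$. Granting these two maps, together with the identity $\bar\pi\circ\iota=\mathrm{id}_{\bell_k}$, both implications are immediate. For the ($\Leftarrow$) direction, a witness $g\in\bell_k$ with $ug=gv$ maps under the homomorphism $\iota$ to a witness of $u\lsim v$ in $\bell_n$. For the ($\Rightarrow$) direction, if $ug=gv$ holds in $\bell_n$ for some $g\in\bell_n$, I would apply $\bar\pi$ to obtain $\bar\pi(u)\,\bar\pi(g)=\bar\pi(g)\,\bar\pi(v)$ in $\bell_k$; since $u,v$ already lie in $\bell_k$ we have $\bar\pi(u)=u$ and $\bar\pi(v)=v$, so $g'=\bar\pi(g)\in\bell_k$ is the desired witness.

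The real content is showing that the letter-erasing map $\pi\colon\aA_n^*\to\aA_k^*$, defined on generators by $a\mapsto a$ for $a\leq k$ and $a\mapsto\varepsilon$ for $a>k$, descends to a well-defined homomorphism $\bar\pi\colon\bell_n\to\bell_k$. As $\bellcong$ is the congruence generated by $\rR_\bell$, it suffices to check that $\pi$ sends each defining relation over $\aA_n$ to a pair of words that are $\bellcong$-equal over $\aA_k$; this then propagates along any chain of elementary $\rR_\bell$-transitions, since a homomorphism applied to a factor-rewrite yields a $\bellcong$-equality between the images. I would therefore fix a relation $(yux,yxu)$ with $u=u_m\cdots u_1$ and $x<y\leq u_1<\cdots<u_m$, and compute $\pi$ of both sides.

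The hard part, and the only genuinely delicate step, is the case analysis on where $k$ sits relative to the increasing chain $x<y\leq u_1<\cdots<u_m$. The crucial structural feature is exactly that these symbols form such a chain, so erasing those above $k$ truncates $u$ to an initial segment while preserving the relative order. If $y\leq k$, then the surviving entries of $u$ are precisely some $u_1<\cdots<u_j\leq k$, and the two images are $y\,\pi(u)\,x$ and $y\,x\,\pi(u)$, which again constitute a relation of $\rR_\bell$ over $\aA_k$ (or coincide as words when $\pi(u)=\varepsilon$). If $y>k$, then every $u_i>k$ is erased and both $y$ and the whole block vanish, so both images collapse to the single word $\pi(x)$. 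In every case the two images are $\bellcong$-equal, establishing well-definedness of $\bar\pi$. Finally, $\bar\pi$ fixes any word over $\aA_k$, giving $\bar\pi\circ\iota=\mathrm{id}_{\bell_k}$ and closing both directions. I expect the well-definedness of $\iota$ to be routine, with the order-chain case analysis being the main obstacle.
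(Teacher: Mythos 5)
Your proof is correct, but it follows a genuinely different route from the paper. The paper works directly with the tableau insertion algorithm: from a witness $g\in\bell_n$ with $ug=gv$ it derives $u^ig=gv^i$ for all $i$, and then argues that if $g$ contained a symbol exceeding $k$, the columns of $\pPl$-insertion would place the large symbols of $g$ differently in $u^ig$ and in $gv^i$, a contradiction; hence \emph{every} witness $g$ already lies in $\bell_k$. You instead build a retraction $\bar\pi\colon\bell_n\to\bell_k$ by erasing letters greater than $k$, verify it is well defined on the presentation $(\aA_n,\rR_\bell)$ (your case analysis is sound: when $y\leq k$ the chain $x<y\leq u_1<\cdots$ truncates to an initial segment and the image pair is again a relation of $\rR_\bell$ over $\aA_k$ or a pair of identical words, and when $y>k$ both sides collapse to $\pi(x)$, covering both $x\leq k$ and $x>k$), and then push a witness through $\bar\pi$ to get a possibly different witness in $\bell_k$. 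What your approach buys is a purely syntactic argument from the multihomogeneous presentation, no tableau combinatorics, plus a reusable structural fact (the letter-deletion map is a monoid retraction $\bell_n\to\bell_k$ splitting the natural map $\iota\colon\bell_k\to\bell_n$); what the paper's approach buys is the stronger conclusion that any conjugating element for $u,v\in\bell_k$ inside $\bell_n$ must itself have all symbols in $\aA_k$, not merely that some witness in $\bell_k$ exists. Both arguments implicitly use the standard identification of $\bell_k$ with a submonoid of $\bell_n$ (insertion is alphabet-independent), which you and the paper equally take as routine.
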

\begin{proof}
Let $u,v\in \bell_k$ and $n\geq k$. Suppose that $u\lsim v$ in $\bell_n$.
Note that $u$ and $v$ have the same evaluation.
 There exists $g\in \bell_n$
such that $u g= g v$. If $g$ is the identity then the result holds trivially.
Assume that the tableau representation of
$g$ has $j$ columns.

Since $u g= g v$, then $u^2g=u u g
= u g v= g v v=g v^2$. Using the same reasoning, it
follows
that for any $i\geq 1$, $u^i g=g v^i$.
Note that if $a$ is the minimum symbol occuring in $u$, then $u^i$ has bottom
row beginning (from left to right) with (at least) $i$ symbols $a$.

Suppose $g$ has a symbol greater than $k$.
As $\cont(u)\subseteq \aA_k$, the symbols from ${g}$ that are greater or
equal than $k$ have to be inserted in the tableau representation of $u^j$ to
the right of the first $j$ columns. Now, in the tableau representation of
$gv^i$, the symbols from $g$ are inserted into the first $j$ columns. This is a
contradiction, since $u^ig=gv^i$. So all symbols from $g$ are less or equal
than $k$, that is, $g\in\bell_k$.

The converse direction of the lemma is obvious from the definition of
$\lsim$.
\end{proof}

Let $C_2=\left\{\pPl(1),\pPl(21)\right\}$.
As proved in \cite[Proposition~4.1]{1706.06884}, the submonoid of $\bell_2$
generated by $C_2$, denoted $\langle C_2\rangle$, is free. Observe that the
elements of  $\langle C_2\rangle$ are precisely the elements of $\bell_2$ whose
tableau representation has bottom row filled with symbols $1$.

\begin{lem}\label{lem:left_conjugacy_Bell2}
For any $u,v\in \langle C_2\rangle$ and $n\geq 2$,
\begin{equation*}
	u\lsim v \text{ in } \bell_n\ \Leftrightarrow\ u\lsim v
	\text{ in } \langle C_2\rangle.
\end{equation*}
\end{lem}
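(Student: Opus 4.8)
The plan is to prove the nontrivial implication $(\Rightarrow)$, the converse being immediate: if $u\lsim v$ in $\langle C_2\rangle$ via some $g\in\langle C_2\rangle$ with $ug=gv$, then $g\in\langle C_2\rangle\subseteq\bell_2\subseteq\bell_n$ witnesses $u\lsim v$ in $\bell_n$. For $(\Rightarrow)$, I would first invoke the preceding lemma with $k=2$: since $u,v\in\langle C_2\rangle\subseteq\bell_2$, we have $u\lsim v$ in $\bell_n$ if and only if $u\lsim v$ in $\bell_2$. Thus it suffices to show that a conjugator $g\in\bell_2$ with $ug=gv$ can always be replaced by one lying in $\langle C_2\rangle$.

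The key structural input is a normal form for $\bell_2$. Since the columns of an \lps\ \emph{tableau} over $\aA_2$ are strictly decreasing, each column is one of $\pPl(1)$, $\pPl(2)$, $\pPl(21)$, and the weakly increasing bottom row forces all columns with bottom entry $1$ (namely $\pPl(1)$ and $\pPl(21)$) to precede all columns $\pPl(2)$. Hence every element of $\bell_2$ factors uniquely as $h\,\pPl(2^q)$ with $h\in\langle C_2\rangle$ and $q\geq 0$, where $q$ counts the trailing one-box columns $\pPl(2)$; moreover $\langle C_2\rangle$ is exactly the set of elements with $q=0$. I would write the given conjugator as $g=h\,\pPl(2^q)$ in this form and prove that $h$ itself is a conjugator, i.e.\ that $uh=hv$.

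The heart of the argument is a direct analysis of the \lps\ insertion on the two-letter alphabet, tracking how the $q$ trailing columns $\pPl(2)$ behave. On the left, because $uh\in\langle C_2\rangle$ has an all-$1$ bottom row, appending $2^q$ merely adds $q$ new columns $\pPl(2)$, so $ug=(uh)\,\pPl(2^q)$ has exactly $q$ bottom-row $2$'s and bottom-$1$ part $uh$. On the right, inserting the column reading of $v$ into $g=h\,\pPl(2^q)$ is governed by two rules I would establish: inserting the reading $21$ of a column $\pPl(21)$ of $v$ appends a column $\pPl(21)$ and leaves the number of trailing $\pPl(2)$'s unchanged, whereas inserting the reading $1$ of a column $\pPl(1)$ of $v$ bumps the leftmost trailing $\pPl(2)$ into a $\pPl(21)$ (decreasing the trailing count by one) when one remains, and otherwise appends a column $\pPl(1)$. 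Consequently $gv=(h v')\,\pPl(2^{q'})$, where $v'$ is obtained from $v$ by converting its first $\min(q,c)$ columns $\pPl(1)$ into columns $\pPl(21)$, $c$ is the number of columns $\pPl(1)$ in $v$, and $q'=\max(0,q-c)$.

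Finally I would equate the two normal forms arising from $ug=gv$. Matching the numbers of trailing $\pPl(2)$ columns gives $q=\max(0,q-c)$, which forces $q=0$ or $c=0$; in either case no column $\pPl(1)$ of $v$ is converted, so $v'=v$, and matching the bottom-$1$ parts yields $uh=hv$ in $\langle C_2\rangle$. Hence $g'=h\in\langle C_2\rangle$ witnesses $u\lsim v$ in $\langle C_2\rangle$, completing the proof. The main obstacle is the insertion bookkeeping of the previous paragraph: one must verify carefully that the trailing columns $\pPl(2)$ are consumed one at a time, from the left, exactly by the single-box columns $\pPl(1)$ of $v$, and that this is the only way they interact with the insertion of $v$.
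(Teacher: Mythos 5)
Your proposal is correct, but it is organised differently from the paper's proof. Both arguments begin the same way: the converse is immediate, and the rank-reduction lemma with $k=2$ lets you assume the conjugator $g$ lies in $\bell_2$; both also rest on the same two insertion facts over $\aA_2$ (a symbol $2$ always opens a new column, while a symbol $1$ bumps the leftmost bottom-$2$ column into a column $\pPl(21)$ if one exists). From there the paper splits into cases: if $u\in\langle\pPl(21)\rangle$ an evaluation argument gives $u=v$ directly; otherwise $v$ has at least one single-box column $\pPl(1)$, and assuming $g$ has a column $\pPl(2)$ one counts columns --- $ug$ has exactly the columns of $u$ followed by those of $g$, whereas $gv$ loses at least one column when the first single $1$ of $v$ is inserted --- yielding a contradiction, so the \emph{given} conjugator $g$ already lies in $\langle C_2\rangle$. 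You instead avoid any case split: you write $g=h\,\pPl(2^q)$ in the (correct, and easily justified) normal form for $\bell_2$, compute the full normal forms of $ug$ and of $gv$, and conclude from uniqueness that $q=\max(0,q-c)$, hence $v'=v$ and $uh=hv$, so the truncation $h$ is a conjugator in $\langle C_2\rangle$. Your bookkeeping rules for the effect of inserting the readings $21$ and $1$ are accurate as statements about the net effect on the tableau, and your approach buys a uniform argument plus an explicit replacement conjugator; indeed it handles the case $c=0$, $q>0$ (e.g.\ $u=v=\pPl(21)$, $g=\pPl(2)$) where the original conjugator genuinely fails to lie in $\langle C_2\rangle$, which is precisely why the paper needs its separate first case. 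One small caution for the write-up: when a factor $21$ of $v$ is inserted while trailing columns $\pPl(2)$ remain, it is the $1$ of that factor that consumes the leftmost trailing $\pPl(2)$ while the $2$ replenishes the trailing block, so your closing remark that the trailing columns interact \emph{only} with the single-box columns $\pPl(1)$ of $v$ is not literally true box-by-box; the net-effect rules you state earlier are what should be proved, and they are correct.
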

\begin{proof}
Let $u,v\in \langle C_2\rangle$, $n\geq 2$ and suppose that
$u\lsim v$ in $\bell_n$.
Suppose that $u\in\langle \pPl(21)\rangle$. Since $u\evsim v$,  then also $v\in
\langle\pPl(21)\rangle$, and thus $u=v$. Therefore the result holds.

Suppose now that $u\notin \langle \pPl(21)\rangle$. Then at least one of the
columns of the tableau representation of $u$ has height one and is filled
with the symbol $1$. Note that the tableau representation of $v$ has the same
number of columns of heigth two, and the same number of columns of
heigth one (and each such box is filled with the symbol $1$).

Let $g\in \bell_n$ be such that $u g= g v$.
By the previous lemma we can assume $g\in\bell_2$.
If $g$ is the identity then the result holds trivially.
Suppose that the tableau representation of $g$ has  at
least one  column with height one filled with the symbol $2$.
Attending to Algorithm~\ref{alg:PSinsertion} and since the bottom row of $u$
is filled with  the symbol $1$, $ug$ is represented by a tableau that is
 composed by the columns of $u$ followed by the columns of $g$.

Now,
the tableau representation of $gv$ has at least one less column. Indeed,
consider the column reading of the tableau representation of $v$, which is a
word from $\{1,21\}^*$, where at least one single symbol $1$ is used, that is,
it does not belong to $\{21\}^*$. Applying Algorithm~\ref{alg:PSinsertion} we
will first insert symbols from $g$, and get the tableau representation of $g$,
followed by the insertion of the column reading from $v$. Now, each time a word
$21$ is inserted we obtain a new column, but the first time a single symbol
$1$ is inserted it will take place in the leftmost column of height one filled
with the symbol $2$, becoming a column of heigth two and column reading $21$.
Thus, the tableau representation of $gv$ cannot have the same number of columns
as the tableau representation of $ug$. This is a contradiction. Therefore, the
tableau representation of $g$ has bottom row filled with the symbol $1$, and
hence $g\in\langle C_2\rangle$.

Since the converse direction is immediate, the result follows.
\end{proof}

\begin{prop}
	\label{prop511}
	For the \lps\ monoid of rank $n$, with $n\geq 2$, we have
	\begin{equation*}
		\lsim\ \subsetneq\
\evsim.
	\end{equation*}
\end{prop}
\begin{proof}
In the free monoid of rank $2$  the relation $\tpsim$ is equal to $\lsim$
\cite[Theorem~3]{lentin1967combinatorial}, and it is properly contained in
$\evsim$ (For example, in $\aA_2^*$, there are words with the same
evaluation $2121$ and $2112$, for which $2121\nlsim 2112$).

Consider the embedding  $\eta:\aA_2^*\to\bell_n$ given by $1\mapsto
\pPl(1)$ and $2\mapsto \pPl(21)$. This map yields an isomorphism between
$\aA_2^*$ and the free submonoid of $\bell_n$, $\langle C_2\rangle$. Using the
example of the first paragraph and the isomorphism, we conclude that the
elements $\pPl(211211)$ and $\pPl(211121)$ of $\langle C_2\rangle$ that have
the same evaluation,  satisfy $\pPl(211211) \nlsim \pPl(211121)$ in
$\langle
C_2\rangle$. By  Lemma~\ref{lem:left_conjugacy_Bell2} we get $\pPl(211211)
\nlsim \pPl(211121)$ in $\bell_n$.  The result follows.
\end{proof}

Regarding the relation between $\tpsim$ and $\lsim$ in the \lps\ monoids
of rank greater or equal than $3$ we leave the following:
\begin{prob}
In any multihomogeneous monoid the inclusion $\tpsim\ \subseteq\ \lsim$
holds. For the \lps\ monoid of rank $n$, $\bell_n$, with $n\geq 3$, is the
inclusion strict, or does the equality hold?
\end{prob}

Considering this problem we were able to prove the following
result:
\begin{prop}
Let $u,v$ be elements of $\bell_n$ with exactly two symbols (with possible
multiple occurrences)  and  $n\geq 2$. In $\bell_n$, the following holds
		\begin{equation*}
		u\ \tpsim\ v\ \Leftrightarrow\ u\ \lsim\ v.
		\end{equation*}
\end{prop}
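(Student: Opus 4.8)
Since ${\tpsim}\subseteq{\lsim}$ holds in every multihomogeneous monoid, the content of the statement is the reverse inclusion, so the plan is to prove that $u\lsim v$ implies $u\tpsim v$ whenever $u,v$ have exactly two symbols and the same evaluation. First I would reduce to the case where the common content is $\{1,2\}$: a cyclic shift never changes the evaluation, so the whole $\tpsim$-class of $u$ lives in the submonoid generated by the two occurring symbols, which is isomorphic to $\bell_2$; and, arguing as in the two lemmas preceding Proposition~\ref{prop511} (first pushing the conjugator $g$ into the finite-rank monoid, then excluding any symbol of $g$ not equal to one of the two symbols occurring in $u$), the witness $g$ of $u\lsim v$ may also be taken inside this submonoid. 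From here everything takes place in $\bell_2$, whose only nontrivial defining relation $221=212$ says exactly that $\pPl(2)$ commutes with $\pPl(21)$; I shall also use the identity $\pPl(2)\pPl(1)=\pPl(21)$.

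The heart of the argument is a normalisation carried out purely by cyclic shifts. Represent each element of $\bell_2$ by its tableau: a sequence of single-$1$ columns $\pPl(1)$ and double columns $\pPl(21)$ (these fill the bottom row with $1$'s), followed by single-$2$ columns $\pPl(2)$. Suppose $w$ still has a single-$2$ column. Peeling off the last column writes $w=X\pPl(2)$, and the cyclic shift $w\psim \pPl(2)X$ brings that $\pPl(2)$ to the front; using that $\pPl(2)$ commutes with every double column, I slide it rightwards past the leading double columns until it meets the first single-$1$ column, where $\pPl(2)\pPl(1)=\pPl(21)$ absorbs it into a new double column. The net effect of this one cyclic shift is to remove one single-$1$ column and one single-$2$ column and create one double column. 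Iterating, I reach either an element with no single-$2$ column (when $\abs{w}_1\geq\abs{w}_2$), i.e.\ an element of the free submonoid $\langle C_2\rangle$, or an element with no single-$1$ column (when $\abs{w}_1\leq\abs{w}_2$), which is forced to be the unique element $\pPl\bigl((21)^{\abs{w}_1}2^{\abs{w}_2-\abs{w}_1}\bigr)$.

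When $\abs{w}_1<\abs{w}_2$ this already finishes the proof: every element of the given evaluation is $\tpsim$-equivalent to the \emph{same} element $\pPl\bigl((21)^{\abs{w}_1}2^{\abs{w}_2-\abs{w}_1}\bigr)$, so the whole $\evsim$-class is a single $\tpsim$-class and the chain ${\tpsim}\subseteq{\lsim}\subseteq{\evsim}$ collapses. When $\abs{w}_1\geq\abs{w}_2$ I use the normalisation to send $u$ to some $u_0\in\langle C_2\rangle$ and $v$ to some $v_0\in\langle C_2\rangle$ by cyclic shifts. Since $\tpsim$ is symmetric and contained in $\lsim$, and $\lsim$ is transitive, the chain $u_0\lsim u\lsim v\lsim v_0$ gives $u_0\lsim v_0$ with both elements in $\langle C_2\rangle$; Lemma~\ref{lem:left_conjugacy_Bell2} then yields $u_0\lsim v_0$ inside $\langle C_2\rangle$, and since $\langle C_2\rangle$ is free this means $u_0$ and $v_0$ are cyclic rotations of one another, say $u_0=\pPl(PQ)$ and $v_0=\pPl(QP)$ for words $P,Q$. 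That rotation is realised by the single cyclic shift $\pPl(P)\pPl(Q)\psim\pPl(Q)\pPl(P)$, so $u_0\tpsim v_0$; composing, $u\tpsim u_0\tpsim v_0\tpsim v$.

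The step I expect to be most delicate is the reduction of the conjugator $g$ to the two-symbol submonoid in the opening paragraph: the preceding lemmas are phrased for the prefix alphabets $\aA_k$, so I must check that their insertion-based arguments still exclude symbols lying strictly between, or below, the two symbols actually occurring in $u$ and $v$. The normalisation itself is routine once the two identities $\pPl(2)\pPl(1)=\pPl(21)$ and $\pPl(2)\pPl(21)=\pPl(21)\pPl(2)$ are in hand, but I would write the ``slide and absorb'' cyclic shift out in full, since it is exactly where the two-symbol hypothesis is used: with three or more symbols a single $\pPl(2)$ can no longer be moved freely past the intervening columns, which is precisely why the corresponding question is left open in higher rank.
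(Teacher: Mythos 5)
Your proposal is correct and follows essentially the same route as the paper: normalise by cyclic shifts so that the trailing $\pPl(2)$-columns come to the front (the paper does this in a single shift, writing $u=\pPl(u'u'')\psim\pPl(u''u')$ with $\pPl(u')\in\langle C_2\rangle$ and $u''\in 2^*$, rather than your iterated slide-and-absorb), landing either on the unique element $\pPl\left((21)^i2^j\right)$ when the $2$'s are at least as numerous as the $1$'s, or inside the free submonoid $\langle C_2\rangle$ otherwise, and in the latter case transferring $\lsim$ into $\langle C_2\rangle$ via Lemma~\ref{lem:left_conjugacy_Bell2} and invoking Lentin's theorem that conjugacy in a free monoid is realised by a rotation, hence by a single cyclic shift. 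The reduction to the symbols $\{1,2\}$ that you flag as the delicate point is handled in the paper only by the words ``without loss of generality,'' so your attempt matches the paper's proof in both substance and in what it leaves implicit.
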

\begin{proof}
Without lost of generality, assume that  $u,v\in \bell_2$ and that
$u\lsim v$ in $\bell_n$. Hence
$u \evsim v$ and thus for $a\in\aA_2$, the number of symbols $a$
in $u$ and $v$ is the same.

As $u,v\in \bell_2$, $u=\pPl(u'u'')$ and $v=\pPl(v'v'')$ where $\pPl(u'),\pPl(v')
\in\langle C_2\rangle$, and $\pPl(u''),\pPl(v'')\in \langle\pPl(2)\rangle$.
Note that
$u\psim \pPl(u''u')$ and $v\psim \pPl(v''v')$ in $\bell_n$.

We consider two cases. If $\abs{u'u''}_2\geq \abs{u'u''}_1$, then
$\pPl(u''u')=\pPl\left((21)^i2^j\right)$ and $\pPl(v''v')
=\pPl\left((21)^k2^l\right)$ for some $i,j,k,l\in\mathbb{N}_0$.
As $\abs{u''u'}_a=\abs{v''v'}_a$ for all $a\in\aA_2$, we deduce that
$i=k$ and $i+j=k+l$, and thus it follows that $j=l$. So, we conclude that
$\pPl(u''u')=\pPl(v''v')$. Therefore $u\psim \pPl(u''u')=\pPl(v''v')\psim v$
and thus $u\ \tpsim\ v$ in $\bell_n$.

Now suppose that ${|u'u''|}_1>{|u'u''|}_2$. In this case $\pPl(u''u'),
\pPl(v''v')\in \langle C_2\rangle$. As in $\bell_n$
$\pPl(u''u') \psim u$, $u\lsim v$, $\pPl(v''v')\psim v$ and ${\psim}\subseteq
{\lsim}$, it  follows that $\pPl(u''u')\lsim \pPl(v''v')$ in $\bell_n$, by the
transitivity of $\lsim$. Hence, by Lemma~\ref{lem:left_conjugacy_Bell2},
$\pPl(u''u')\lsim \pPl(v''v')$ in the free monoid  $\langle C_2\rangle$. In a free
monoid we have ${\tpsim} ={\lsim}$  \cite[Theorem~3]{lentin1967combinatorial}.
Therefore $\pPl(u''u')\ \tpsim\ \pPl(v''v')$ in $\langle C_2\rangle$. So,
$\pPl(u''u')\ \tpsim\ \pPl(v''v')$ in $\bell_n$. Combining this with fact
that $u\psim \pPl(u''u')$ and $\pPl(v''v')\psim v$ in $\bell_n$, it follows
that $u\ \tpsim\ v$ in $\bell_n$.

In both cases $u\ \tpsim\ v$ in $\bell_n$ and the result follows.
\end{proof}

\bibliographystyle{alpha}
\bibliography{\jobname}

\end{document}